\theoremstyle{plain}
\newtheorem{thm}{Theorem}[section]
\newtheorem*{thm*}{Theorem}
\newtheorem{prop}{Proposition}[section]
\newtheorem*{prop*}{Proposition}
\newtheorem{cor}{Corollary}[section]
\newtheorem*{cor*}{Corollary}
\newtheorem*{lem*}{Lemma}
\theoremstyle{definition}
\newtheorem{defn}{Definition}[section]
\newtheorem*{defn*}{Definition}
\newtheorem*{exmp*}{Example}
\newtheorem*{exmps*}{Examples}
\newtheorem{rem}{Remark}[section]
\newtheorem*{rem*}{Remark}
\newtheorem{rems}{Remarks}[section]
\newtheorem*{rems*}{Remarks}
\newtheorem*{note*}{Note}
\newcommand{\N}{{\mathbb N}}
\newcommand{\Z}{{\mathbb Z}}
\newcommand{\R}{{\mathbb R}}
\newcommand{\C}{{\mathbb C}}
\DeclareMathOperator{\Rep}{Re\,}
\DeclareMathOperator{\Imp}{Im\,}
\DeclareMathOperator{\dist}{dist}
\DeclareMathOperator{\spa}{span}
\begin{document}
\title[On the Gevrey ultradifferentiability of weak solutions]
{On the Gevrey ultradifferentiability\\ 
of weak solutions\\
of an abstract evolution equation\\
with a scalar type spectral operator\\
on the open semi-axis}
\author[Marat V. Markin]{Marat V. Markin}
\address{
Department of Mathematics\newline
California State University, Fresno\newline
5245 N. Backer Avenue, M/S PB 108\newline
Fresno, CA 93740-8001, USA
}
\email{mmarkin@csufresno.edu}
\subjclass[2010]{Primary 34G10, 47B40, 30D60; Secondary 47B15, 47D06, 47D60, 30D15}
\keywords{Weak solution, scalar type spectral operator, Gevrey classes}
\begin{abstract}
Given the abstract evolution equation
\[
y'(t)=Ay(t),\ t\ge 0,
\]
with \textit{scalar type spectral operator} $A$ in a complex Banach space, found are conditions \textit{necessary and sufficient} for all \textit{weak solutions} of the equation, which a priori need not be strongly differentiable, to be strongly Gevrey ultradifferentiable of order $\beta\ge 1$, in particular \textit{analytic} or \textit{entire}, on the open semi-axis $(0,\infty)$. Also, revealed is a certain interesting inherent smoothness improvement effect.
\end{abstract}
\maketitle
\section[Introduction]{Introduction}

We find conditions on a \textit{scalar type spectral operator} $A$ in a complex Banach space necessary and sufficient for all \textit{weak solutions} of the evolution equation
\begin{equation}\label{1}
y'(t)=Ay(t),\ t\ge 0,
\end{equation}
which a priori need not be strongly differentiable,
to be strongly Gevrey ultradifferentiable of order $\beta\ge 1$, in particular \textit{analytic}, on the \textit{open semi-axis} $(0,\infty)$ and reveal a certain interesting inherent smoothness improvement effect. 

The found results generalize the corresponding ones of
paper \cite{Markin2001(1)}, where similar consideration is given to equation \eqref{1} with a \textit{normal operator} $A$ in a complex Hilbert space, and the characterizations of the generation of Gevrey ultradifferentiable $C_0$-semigroups of Roumieu and Beurling types by scalar type spectral operators found in papers \cite{Markin2004(1),Markin2016} (see also \cite{Markin2008}). They also develop the discourse of papers \cite{Markin2011,Markin2018(4)}, in which the strong differentiability of the weak solutions of equation \eqref{1} on $[0,\infty)$ and $(0,\infty)$ and their strong Gevrey ultradifferentiability of order $\beta\ge 1$ on the \textit{closed semi-axis} $[0,\infty)$ are treated, respectively (cf. also \cite{Markin2018(3)}). 

\begin{defn}[Weak Solution]\label{ws}\ \\
Let $A$ be a densely defined closed linear operator in a Banach space $(X,\|\cdot\|)$. A strongly continuous vector function $y:[0,\infty)\rightarrow X$ is called a {\it weak solution} of equation \eqref{1} if, for any $g^* \in D(A^*)$,
\begin{equation*}
\dfrac{d}{dt}\langle y(t),g^*\rangle = \langle y(t),A^*g^* \rangle,\ t\ge 0,
\end{equation*}
where $D(\cdot)$ is the \textit{domain} of an operator, $A^*$ is the operator {\it adjoint} to $A$, and $\langle\cdot,\cdot\rangle$ is the {\it pairing} between
the space $X$ and its dual $X^*$ (cf. \cite{Ball}).
\end{defn}

\begin{rems}\label{remsws}\
\begin{itemize}
\item Due to the \textit{closedness} of $A$, the weak solution of \eqref{1} can be equivalently defined to be a strongly continuous vector function $y:[0,\infty)\mapsto X$ such that, for all $t\ge 0$,
\begin{equation*}
\int_0^ty(s)\,ds\in D(A)\ \text{and} \ y(t)=y(0)+A\int_0^ty(s)\,ds
\end{equation*}
and is also called a \textit{mild solution} (cf. {\cite[Ch. II, Definition 6.3]{Engel-Nagel}}, {\cite[Preliminaries]{Markin2018(2)}}).
\item Such a notion of \textit{weak solution}, which need not be differentiable in the strong sense, generalizes that of \textit{classical} one, strongly differentiable on $[0,\infty)$ and satisfying the equation in the traditional plug-in sense, the classical solutions being precisely the weak ones strongly differentiable on $[0,\infty)$.
\item When a closed densely defined linear operator $A$
in a complex Banach space $X$ generates a $C_0$-semigroup $\left\{T(t) \right\}_{t\ge 0}$ of  bounded linear operators (see, e.g., \cite{Hille-Phillips,Engel-Nagel}), i.e., the associated \textit{abstract Cauchy problem} (\textit{ACP})
\begin{equation}\label{ACP}
\begin{cases}
y'(t)=Ay(t),\ t\ge 0,\\
y(0)=f
\end{cases}
\end{equation}
is \textit{well-posed} (cf. {\cite[Ch. II, Definition 6.8]{Engel-Nagel}}), the weak solutions of equation \eqref{1} are the orbits
\begin{equation}\label{semigroup}
y(t)=T(t)f,\ t\ge 0,
\end{equation}
with $f\in X$ {\cite[Ch. II, Proposition 6.4]{Engel-Nagel}} (see also {\cite[Theorem]{Ball}}), whereas the classical ones are those with $f\in D(A)$
(see, e.g., {\cite[Ch. II, Proposition 6.3]{Engel-Nagel}}). 
\item In our consideration, the associated \textit{ACP} need not be \textit{well-posed}, i.e., the scalar type spectral operator $A$ need not generate a $C_0$-semigroup (cf. \cite{Markin2002(2)}).
\end{itemize} 
\end{rems} 

\section[Preliminaries]{Preliminaries}

Here, for the reader's convenience, we outline certain essential preliminaries.

\subsection{Scalar Type Spectral Operators}\ 

Henceforth, unless specified otherwise, $A$ is supposed to be a {\it scalar type spectral operator} in a complex Banach space $(X,\|\cdot\|)$ with strongly $\sigma$-additive \textit{spectral measure} (the \textit{resolution of the identity}) $E_A(\cdot)$ assigning to each Borel set $\delta$ of the complex plane $\C$ a projection operator $E_A(\delta)$ on $X$ and having the operator's \textit{spectrum} $\sigma(A)$ as its {\it support} \cite{Dunford1954,Survey58,Dun-SchIII}.

Observe that, on a complex finite-dimensional space, 
the scalar type spectral operators are all linear operators that furnish an \textit{eigenbasis} for the space (see, e.g., \cite{Survey58,Dun-SchIII}) and, in a complex Hilbert space, the scalar type spectral operators are precisely all those that are similar to the {\it normal} ones \cite{Wermer}.

Associated with a scalar type spectral operator in a complex Banach space is the {\it Borel operational calculus} analogous to that for a \textit{normal operator} in a complex Hilbert space \cite{Dun-SchII,Plesner}, which assigns to any Borel measurable function $F:\sigma(A)\to \C$ a scalar type spectral operator
\begin{equation*}
F(A):=\int\limits_{\sigma(A)} F(\lambda)\,dE_A(\lambda)
\end{equation*}
(see \cite{Survey58,Dun-SchIII}).

In particular,
\begin{equation*}
A^n=\int\limits_{\sigma(A)} \lambda^n\,dE_A(\lambda),\ n\in\Z_+,
\end{equation*}
($\Z_+:=\left\{0,1,2,\dots\right\}$ is the set of nonnegative integers, $A^0:=I$, $I$ is the \textit{identity operator} on $X$) and
\begin{equation}\label{exp}
e^{zA}:=\int\limits_{\sigma(A)} e^{z\lambda}\,dE_A(\lambda),\ z\in\C.
\end{equation}

The properties of the {\it spectral measure} and {\it operational calculus}, exhaustively delineated in \cite{Survey58,Dun-SchIII}, underlie the subsequent discourse. Here, we touch upon a few facts of particular importance.

Due to its {\it strong countable additivity}, the spectral measure $E_A(\cdot)$ is {\it bounded} \cite{Dun-SchI,Dun-SchIII}, i.e., there is such an $M\ge 1$ that, for any Borel set $\delta\subseteq \C$,
\begin{equation}\label{bounded}
\|E_A(\delta)\|\le M.
\end{equation}

Observe that the notation $\|\cdot\|$ is used here to designate the norm in the space $L(X)$ of all bounded linear operators on $X$. We adhere to this rather conventional economy of symbols in what follows also adopting the same notation for the norm in the dual space $X^*$. 

For any $f\in X$ and $g^*\in X^*$, the \textit{total variation measure} $v(f,g^*,\cdot)$ of the complex-valued Borel measure $\langle E_A(\cdot)f,g^* \rangle$ is a {\it finite} positive Borel measure with
\begin{equation}\label{tv}
v(f,g^*,\C)=v(f,g^*,\sigma(A))\le 4M\|f\|\|g^*\|
\end{equation}
(see, e.g., \cite{Markin2004(1),Markin2004(2)}).

Also (Ibid.), for a Borel measurable function $F:\C\to \C$, $f\in D(F(A))$, $g^*\in X^*$, and a Borel set $\delta\subseteq \C$,
\begin{equation}\label{cond(ii)}
\int\limits_\delta|F(\lambda)|\,dv(f,g^*,\lambda)
\le 4M\|E_A(\delta)F(A)f\|\|g^*\|.
\end{equation}
In particular, for $\delta=\sigma(A)$,
\begin{equation}\label{cond(i)}
\int\limits_{\sigma(A)}|F(\lambda)|\,d v(f,g^*,\lambda)\le 4M\|F(A)f\|\|g^*\|.
\end{equation}

Observe that the constant $M\ge 1$ in \eqref{tv}--\eqref{cond(i)} is from 
\eqref{bounded}.

Further, for a Borel measurable function $F:\C\to [0,\infty)$, a Borel set $\delta\subseteq \C$, a sequence $\left\{\Delta_n\right\}_{n=1}^\infty$ 
of pairwise disjoint Borel sets in $\C$, and 
$f\in X$, $g^*\in X^*$,
\begin{equation}\label{decompose}
\int\limits_{\delta}F(\lambda)\,dv(E_A(\cup_{n=1}^\infty \Delta_n)f,g^*,\lambda)
=\sum_{n=1}^\infty \int\limits_{\delta\cap\Delta_n}F(\lambda)\,dv(E_A(\Delta_n)f,g^*,\lambda).
\end{equation}

Indeed, since, for any Borel sets $\delta,\sigma\subseteq \C$,
\begin{equation*}
E_A(\delta)E_A(\sigma)=E_A(\delta\cap\sigma)
\end{equation*}
\cite{Survey58,Dun-SchIII}, 
for the total variation,
\begin{equation*}
v(E_A(\delta)f,g^*,\sigma)=v(f,g^*,\delta\cap\sigma).
\end{equation*}

Whence, due to the {\it nonnegativity} of $F(\cdot)$ (see, e.g., \cite{Halmos}),
\begin{multline*}
\int\limits_\delta F(\lambda)\,dv(E_A(\cup_{n=1}^\infty \Delta_n)f,g^*,\lambda)
=\int\limits_{\delta\cap\cup_{n=1}^\infty \Delta_n}F(\lambda)\,dv(f,g^*,\lambda)
\\
\ \
=\sum_{n=1}^\infty \int\limits_{\delta\cap\Delta_n}F(\lambda)\,dv(f,g^*,\lambda)
=\sum_{n=1}^\infty \int\limits_{\delta\cap\Delta_n}F(\lambda)\,dv(E_A(\Delta_n)f,g^*,\lambda).
\hfill
\end{multline*}

The following statement, allowing to characterize the domains of Borel measurable functions of a scalar type spectral operator in terms of positive Borel measures, is fundamental for our consideration.

\begin{prop}[{\cite[Proposition $3.1$]{Markin2002(1)}}]\label{prop}\ \\
Let $A$ be a scalar type spectral operator in a complex Banach space $(X,\|\cdot\|)$ with spectral measure $E_A(\cdot)$ and $F:\sigma(A)\to \C$ be a Borel measurable function. Then $f\in D(F(A))$ iff
\begin{enumerate}[label={(\roman*)}]
\item for each $g^*\in X^*$, 
$\displaystyle \int\limits_{\sigma(A)} |F(\lambda)|\,d v(f,g^*,\lambda)<\infty$ and
\item $\displaystyle \sup_{\{g^*\in X^*\,|\,\|g^*\|=1\}}
\int\limits_{\{\lambda\in\sigma(A)\,|\,|F(\lambda)|>n\}}
|F(\lambda)|\,dv(f,g^*,\lambda)\to 0,\ n\to\infty$,
\end{enumerate}
where $v(f,g^*,\cdot)$ is the total variation measure of $\langle E_A(\cdot)f,g^* \rangle$.
\end{prop}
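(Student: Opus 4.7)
The plan is to prove the two implications separately, with both directions resting on the bound \eqref{cond(i)}/\eqref{cond(ii)} and on the strong $\sigma$-additivity of $E_A(\cdot)$.

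For the forward implication, assume $f\in D(F(A))$. Condition (i) is then immediate from \eqref{cond(i)}, which yields the bound $\int_{\sigma(A)}|F|\,dv(f,g^*,\cdot)\le 4M\|F(A)f\|\|g^*\|$ for every $g^*\in X^*$. For (ii), I would apply \eqref{cond(ii)} with $\delta_n:=\{\lambda\in\sigma(A):|F(\lambda)|>n\}$ to get
\[
\sup_{\|g^*\|=1}\int\limits_{\delta_n}|F(\lambda)|\,dv(f,g^*,\lambda)\le 4M\|E_A(\delta_n)F(A)f\|.
\]
Since $F:\sigma(A)\to\C$ takes finite values, $\delta_n\downarrow\emptyset$, so strong $\sigma$-additivity of $E_A(\cdot)$ applied to the vector $F(A)f\in X$ forces $E_A(\delta_n)F(A)f\to 0$, giving (ii).

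For the converse, assume (i) and (ii) and set $F_n:=F\chi_{\{|F|\le n\}}$, a bounded Borel function, so that $F_n(A)\in L(X)$. Defining $f_n:=F_n(A)f$, I would show that $\{f_n\}$ is Cauchy in $X$ by the Hahn--Banach duality identity $\|x\|=\sup_{\|g^*\|=1}|\langle x,g^*\rangle|$: for $m<n$,
\[
|\langle f_n-f_m,g^*\rangle|=\left|\int\limits_{\{m<|F|\le n\}}F(\lambda)\,d\langle E_A(\lambda)f,g^*\rangle\right|\le\int\limits_{\{|F|>m\}}|F(\lambda)|\,dv(f,g^*,\lambda),
\]
and taking the supremum over the unit sphere of $X^*$, condition (ii) furnishes $\|f_n-f_m\|\to 0$ as $m\to\infty$. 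Hence $f_n\to h$ in $X$ for some $h\in X$.

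To identify $h$ with $F(A)f$, I would use that $E_A(\{|F|\le n\})f\to f$ strongly (again by strong $\sigma$-additivity, since $\{|F|\le n\}\uparrow\sigma(A)$) together with the operational-calculus identity $F_n(A)f=F(A)E_A(\{|F|\le n\})f$, valid because $F$ is bounded on $\{|F|\le n\}$, which places $E_A(\{|F|\le n\})f$ in $D(F(A))$. Closedness of $F(A)$ (it is itself a scalar type spectral operator) then gives $f\in D(F(A))$ and $F(A)f=h$. The main technical point, and the only step that requires genuine care, is the Cauchy estimate: one must convert the uniform-in-$g^*$ tail condition (ii) into a norm estimate on $f_n-f_m$, which is precisely what the Hahn--Banach duality together with the inequality against the total variation measure accomplishes.
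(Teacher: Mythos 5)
The paper does not prove this proposition; it imports it verbatim from \cite[Proposition 3.1]{Markin2002(1)}. Your argument is correct and is essentially the standard (and, in substance, the original) proof: the forward direction via \eqref{cond(i)}--\eqref{cond(ii)} together with $E_A(\{|F|>n\})F(A)f\to 0$ by strong $\sigma$-additivity, and the converse via the truncations $F_n=F\chi_{\{|F|\le n\}}$, the duality norm estimate turning condition (ii) into a Cauchy estimate for $F_n(A)f$, and the identity $F_n(A)f=F(A)E_A(\{|F|\le n\})f$ plus closedness of $F(A)$ (Theorem XVIII.2.11 of \cite{Dun-SchIII}). The only cosmetic remark is that your converse never actually invokes condition (i); this is harmless, since (ii) together with \eqref{tv} already implies (i).
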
 

The succeeding key theorem provides a description of the weak solutions of equation \eqref{1} with a scalar type spectral operator $A$ in a complex Banach space.

\begin{thm}[{\cite[Theorem $4.2$]{Markin2002(1)}}]\label{GWS}\ \\
Let $A$ be a scalar type spectral operator in a complex Banach space $(X,\|\cdot\|)$. A vector function $y:[0,\infty) \to X$ is a weak solution 
of equation \eqref{1} iff there is an $\displaystyle f \in \bigcap_{t\ge 0}D(e^{tA})$ such that
\begin{equation}\label{expf}
y(t)=e^{tA}f,\ t\ge 0,
\end{equation}
the operator exponentials understood in the sense of the Borel operational calculus (see \eqref{exp}).
\end{thm}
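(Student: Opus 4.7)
The plan is to prove both implications. For the easy direction ($\Leftarrow$), given $f\in\bigcap_{t\ge 0}D(e^{tA})$, I would set $y(t):=e^{tA}f$ and verify the weak solution property directly. Strong continuity of $y$ follows from dominated convergence applied to the scalar integrals $\langle e^{tA}f,g^*\rangle=\int_{\sigma(A)}e^{t\lambda}\,d\langle E_A(\lambda)f,g^*\rangle$, with majorants produced through \eqref{cond(i)} on a compact time interval. For the weak derivative condition against $g^*\in D(A^*)$, I would differentiate under the integral in $t$---again justified by dominated convergence, the dominator $|\lambda|e^{(t+\varepsilon)\Rep\lambda}$ being integrable against $v(f,g^*,\cdot)$ by the domain hypothesis---and identify the resulting integral $\int \lambda e^{t\lambda}\,d\langle E_A(\lambda)f,g^*\rangle$ with $\langle y(t),A^*g^*\rangle$ via the spectral-measure identity $d\langle E_A(\lambda)f,A^*g^*\rangle=\lambda\,d\langle E_A(\lambda)f,g^*\rangle$ (verified first on bounded Borel sets, where $E_A(\delta)f\in D(A)$, then extended by monotone class).

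For the harder direction ($\Rightarrow$), given a weak solution $y(\cdot)$ with $f:=y(0)$, I would pass through a spectral truncation. Set $\delta_n:=\{\lambda\in\sigma(A):|\lambda|\le n\}$ and $E_n:=E_A(\delta_n)$; the operator $AE_n=\int_{\delta_n}\lambda\,dE_A(\lambda)$ is bounded, and a direct calculation using the commutation $AE_n=E_nA$ on $D(A)$ shows $E_n^*g^*\in D(A^*)$ for \emph{every} $g^*\in X^*$. Testing the weak-solution identity for $y$ against $E_n^*g^*$ then shows that $y_n(t):=E_ny(t)$ is a weak---hence, by boundedness of the generator, the unique classical---solution of $y_n'=AE_n\,y_n$ with datum $E_nf$, so by the Borel calculus
\[
y_n(t)=e^{tAE_n}E_nf=\int_{\delta_n}e^{t\lambda}\,dE_A(\lambda)f.
\]
By the strong $\sigma$-additivity of $E_A(\cdot)$, $E_n\to I$ strongly, so $y_n(t)\to y(t)$ in $X$ and $\{y_n(t)\}_n$ is bounded. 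I would then invoke Proposition \ref{prop} with $F(\lambda)=e^{t\lambda}$: condition (i) is obtained from \eqref{cond(i)} applied to the bounded function $e^{t\lambda}\chi_{\delta_n}(\lambda)$, giving $\int_{\delta_n}|e^{t\lambda}|\,dv(f,g^*,\lambda)\le 4M\|y_n(t)\|\|g^*\|$, followed by monotone convergence as $n\to\infty$.

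The main obstacle will be condition (ii) of Proposition \ref{prop}, which demands \emph{uniform} decay of the tail integrals on the unit ball of $X^*$---a genuine Banach-space difficulty with no Hilbert-space orthogonality shortcut. I anticipate an $\varepsilon/2$ splitting based on \eqref{cond(ii)}: for the tail set $\tau_{N,n}:=\{\lambda\in\delta_n:|e^{t\lambda}|>N\}$ the inequality yields $\int_{\tau_{N,n}}|e^{t\lambda}|\,dv(f,g^*,\lambda)\le 4M\|E_A(\tau_{N,n})y_n(t)\|\|g^*\|$; the Cauchy property $\|y_n(t)-y_m(t)\|\to 0$ controls the dependence on $n$ uniformly in $g^*$, and sending first $n\to\infty$ and then $N\to\infty$ produces the required uniform smallness while simultaneously identifying $e^{tA}f=y(t)$.
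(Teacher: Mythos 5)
The paper does not prove this theorem: it is imported verbatim from \cite[Theorem 4.2]{Markin2002(1)} and used as a black box, so there is no in-paper proof to compare against. Judged on its own merits, your outline is sound and, in its main lines (spectral truncation $E_A(\delta_n)$, reduction to a bounded generator on each $E_A(\delta_n)X$, and verification of the two conditions of Proposition \ref{prop} for $F(\lambda)=e^{t\lambda}$), it follows the same strategy as the cited source. The ``only if'' half is the stronger part of your write-up: the observation that $E_n^*g^*\in D(A^*)$ for every $g^*\in X^*$, the passage to $y_n(t)=\int_{\delta_n}e^{t\lambda}\,dE_A(\lambda)f$ via uniqueness for the bounded generator $AE_n$, the use of \eqref{cond(ii)} plus monotone convergence for condition (i), and the $\varepsilon/2$ splitting through the Cauchy property of $\{y_n(t)\}$ for condition (ii) all go through (note that once $m$ is fixed, the tail sets $\tau_{N,n}\cap\delta_m$ are empty for $N>e^{tm}$, which is exactly what makes your ``first $n$, then $N$'' limit work).

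Two steps in the ``if'' direction are stated more optimistically than they deserve. First, dominated convergence applied to the scalar integrals $\langle e^{tA}f,g^*\rangle$ yields only \emph{weak} continuity of $y$, whereas Definition \ref{ws} requires \emph{strong} continuity; to get $\|e^{t_nA}f-e^{tA}f\|\to 0$ you must control $\sup_{\|g^*\|=1}\int|e^{t_n\lambda}-e^{t\lambda}|\,dv(f,g^*,\lambda)$, which again forces a split into a bounded spectral part (uniform convergence of the integrand) and a tail handled by the condition-(ii)-type uniform estimate coming from $f\in D(e^{sA})$ --- i.e., the very machinery you reserve for the other direction. Second, the integrability of the dominator $|\lambda|e^{(t+\varepsilon)\Rep\lambda}$ against $v(f,g^*,\cdot)$ does not follow from the domain hypothesis alone ($f\in D(e^{sA})$ controls only $e^{s\Rep\lambda}$, not the extra factor $|\lambda|$, and in particular not $|\Imp\lambda|$); it is obtained from $g^*\in D(A^*)$ through the identity $dv(f,A^*g^*,\lambda)=|\lambda|\,dv(f,g^*,\lambda)$ that you state for the identification step, combined with condition (i) of Proposition \ref{prop} applied to the functional $A^*g^*$. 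Both repairs are routine, but as written these two sentences are the only places where the argument, taken literally, does not close.
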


\begin{rems}\
\begin{itemize}
\item Theorem \ref{GWS} generalizing {\cite[Theorem $3.1$]{Markin1999}}, its counterpart for a normal operator $A$ in a complex Hilbert space, in particular,
implies
\begin{itemize}
\item that the subspace $\bigcap_{t\ge 0}D(e^{tA})$ of all possible initial values of the weak solutions of equation \eqref{1} is the largest permissible for the exponential form given by \eqref{expf}, which highlights the naturalness of the notion of weak solution, and
\item that associated \textit{ACP} \eqref{ACP}, whenever solvable,  is solvable \textit{uniquely}.
\end{itemize} 
\item Observe that the initial-value subspace $\bigcap_{t\ge 0}D(e^{tA})$ of equation \eqref{1} is \textit{dense} in $X$ since it contains the subspace
\begin{equation*}
\bigcup_{\alpha>0}E_A(\Delta_\alpha)X,\ \text{where}\ \Delta_\alpha:=\left\{\lambda\in\C\,\middle|\,|\lambda|\le \alpha \right\},\ \alpha>0,
\end{equation*}
which is dense in $X$ and coincides with the class ${\mathscr E}^{\{0\}}(A)$ of \textit{entire} vectors of $A$ of \textit{exponential type} \cite{Markin2015,Radyno1983(1)}.
\item When a scalar type spectral operator $A$ in a complex Banach space generates a $C_0$-semigroup $\left\{T(t) \right\}_{t\ge 0}$, 
\[
T(t)=e^{tA}\ \text{and}\ D(e^{tA})=X,\ t\ge 0,
\]
\cite{Markin2002(2)}, and hence, Theorem \ref{GWS} is consistent with the well-known description of the weak solutions for this setup (see \eqref{semigroup}).
\end{itemize} 
\end{rems} 

Subsequently, the frequent terms {\it ``spectral measure"} and {\it ``operational calculus"} are abbreviated to {\it s.m.} and {\it o.c.}, respectively.

\subsection{Gevrey Classes of Functions}\label{GCF}\

\begin{defn}[Gevrey Classes of Functions]\ \\
Let $(X,\|\cdot\|)$ be a (real or complex) Banach space, $C^\infty(I,X)$ be the space of all $X$-valued functions strongly infinite differentiable on an interval $I\subseteq \R$, and $0\le \beta<\infty$.

The following subspaces of $C^\infty(I,X)$
\begin{align*}
{\mathscr E}^{\{\beta\}}(I,X):=\bigl\{g(\cdot)\in C^{\infty}(I, X) \bigm |&
\forall\, [a,b] \subseteq I\ \exists\, \alpha>0\ \exists\, c>0:
\\
&\max_{a \le t \le b}\|g^{(n)}(t)\| \le c\alpha^n [n!]^\beta,
\ n\in\Z_+\bigr\},\\
{\mathscr E}^{(\beta)}(I,X):= \bigl\{g(\cdot) \in C^{\infty}(I,X) \bigm |& 
\forall\, [a,b] \subseteq I\ \forall\, \alpha > 0 \ \exists\, c>0:
\\
&\max_{a \le t \le b}\|g^{(n)}(t)\| \le c\alpha^n [n!]^\beta,
\ n\in\Z_+\bigr\},
\end{align*}
are called the {\it $\beta$th-order Gevrey classes} of strongly ultradifferentiable vector functions on $I$ of {\it Roumieu} and {\it Beurling type}, respectively (see, e.g., \cite{Gevrey,Komatsu1,Komatsu2,Komatsu3}).
\end{defn}

\begin{rems}\
\begin{itemize}
\item In view of {\it Stirling's formula}, the 
sequence $\left\{ [n!]^\beta\right\}_{n=0}^\infty$ can be replaced with
$\left\{ n^{\beta n}\right\}_{n=0}^\infty$.
\item For $0\le\beta<\beta'<\infty$, the inclusions
\begin{equation*}
{\mathscr E}^{(\beta)}(I,X)\subseteq{\mathscr E}^{\{\beta\}}(I,X)
\subseteq {\mathscr E}^{(\beta')}(I,X)\subseteq
{\mathscr E}^{\{\beta'\}}(I,X)\subseteq C^{\infty}(I,X)
\end{equation*}
hold.
\item For $1<\beta<\infty$, the Gevrey classes
are \textit{non-quasianalytic} (see, e.g., \cite{Komatsu2}).
\item For $\beta=1$, ${\mathscr E}^{\{1\}}(I,X)$ 
is the class of all {\it analytic} on $I$, i.e., {\it analytically continuable} into complex neighborhoods of $I$, vector functions and ${\mathscr E}^{(1)}(I,X)$ is the class of all {\it entire}, i.e., allowing {\it entire} continuations, vector functions \cite{Mandel}.
\item For $0\le\beta<1$, the Gevrey class ${\mathscr E}^{\{\beta\}}(I,X)$ (${\mathscr E}^{(\beta)}(I,X)$) consists of all functions $g(\cdot)\in {\mathscr E}^{(1)}(I,X)$ such that, for some (any) $\gamma>0$, there is an $M>0$ for which
\begin{equation}\label{order}
\|g(z)\|\le Me^{\gamma|z|^{1/(1-\beta)}},\ z\in \C,
\end{equation}
\cite{Markin2001(2)} (see also \cite{Markin2019(1)}). In particular, for $\beta=0$, ${\mathscr E}^{\{0\}}(I,X)$ and ${\mathscr E}^{(0)}(I,X)$ are the classes of entire vector functions of \textit{exponential} and \textit{minimal exponential type}, respectively (see, e.g., \cite{Levin}).
\end{itemize} 
\end{rems} 

\subsection{Gevrey Classes of Vectors}\

One can consider the Gevrey classes in a more general sense. 

\begin{defn}[Gevrey Classes of Vectors]\ \\
Let $(A,D(A))$ be a densely defined closed linear operator in a (real or complex) Banach space $(X,\|\cdot\|)$, $0\le \beta<\infty$, and
\begin{equation*}
C^{\infty}(A):=\bigcap_{n=0}^{\infty}D(A^n)
\end{equation*}
be the subspace of \textit{infinite differentiable vectors} of $A$.

The following subspaces of $C^{\infty}(A)$
\begin{align*}
{\mathscr E}^{\{\beta\}}(A)&:=\left\{x\in C^{\infty}(A)\, \middle |\, 
\exists\, \alpha>0\ \exists\, c>0:
\|A^nx\| \le c\alpha^n [n!]^\beta,\ n\in\Z_+ \right\},\\
{\mathscr E}^{(\beta)}(A)&:=\left\{x \in C^{\infty}(A)\, \middle|\,\forall\, \alpha > 0 \ \exists\, c>0:
\|A^nx\| \le c\alpha^n [n!]^\beta,\ n\in\Z_+ \right\}
\end{align*}
are called the \textit{$\beta$th-order Gevrey classes} of ultradifferentiable vectors of $A$ of \textit{Roumieu} and \textit{Beurling type}, respectively (see, e.g., \cite{GorV83,Gor-Knyaz,book}).
\end{defn}

\begin{samepage}
\begin{rems}\
\begin{itemize}
\item In view of {\it Stirling's formula}, the 
sequence $\left\{ [n!]^\beta\right\}_{n=0}^\infty$ can be replaced with
$\left\{ n^{\beta n}\right\}_{n=0}^\infty$.
\item For $0\le\beta<\beta'<\infty$, the inclusions
\begin{equation*}
{\mathscr E}^{(\beta)}(A)\subseteq{\mathscr E}^{\{\beta\}}(A)
\subseteq {\mathscr E}^{(\beta')}(A)\subseteq
{\mathscr E}^{\{\beta'\}}(A)\subseteq C^{\infty}(A)
\end{equation*}
hold.
\item In particular, ${\mathscr E}^{\{1\}}(A)$ and ${\mathscr E}^{(1)}(A)$ are the classes of {\it analytic} and {\it entire} vectors of $A$, respectively \cite{Goodman,Nelson} and ${\mathscr E}^{\{0\}}(A)$ and ${\mathscr E}^{(0)}(A)$ are the classes of \textit{entire} vectors of $A$ of \textit{exponential} and \textit{minimal exponential type}, respectively (see, e.g., \cite{Radyno1983(1),Gor-Knyaz}).
\item In view of the \textit{closedness} of $A$, it is easily seen that the class ${\mathscr E}^{(1)}(A)$ forms the subspace of the initial values $f\in X$ generating the (classical) solutions of \eqref{1}, which are entire vector functions represented by the power series
\begin{equation*}
\sum_{n=0}^\infty \dfrac{t^n}{n!}A^nf,\ t\ge 0,
\end{equation*}
the classes ${\mathscr E}^{\{\beta\}}(A)$ and ${\mathscr E}^{(\beta)}(A)$ with $0\le\beta<1$ being the subspaces of such initial values for which the solutions satisfy growth estimate \eqref{order} with some (any) $\gamma>0$ and some $M>0$, respectively (cf. \cite{Levin}).
\end{itemize} 
\end{rems} 
\end{samepage} 

As is shown in \cite{GorV83} (see also \cite{Gor-Knyaz,book}), if $0<\beta<\infty$, for a {\it normal operator} $A$ in a complex Hilbert space,
\begin{equation}\label{GC}
{\mathscr E}^{\{\beta\}}(A)=\bigcup_{t>0} D(e^{t|A|^{1/\beta}})\ \text{and}\ 
{\mathscr E}^{(\beta)}(A)=\bigcap_{t>0} D(e^{t|A|^{1/\beta}}),
\end{equation}
the operator exponentials $e^{t|A|^{1/\beta}}$, $t>0$, understood in the sense of the Borel operational calculus (see, e.g., \cite{Dun-SchII,Plesner}).

In \cite{Markin2004(2),Markin2015}, descriptions \eqref{GC} are extended  to \textit{scalar type spectral operators} in a complex Banach space, in which form they are basic for our discourse. In \cite{Markin2015}, similar nature descriptions of the classes ${\mathscr E}^{\{0\}}(A)$ and ${\mathscr E}^{(0)}(A)$ ($\beta=0$), known for a normal operator $A$ in a complex Hilbert space (see, e.g., \cite{Gor-Knyaz}), are also generalized to scalar type spectral operators in a complex Banach space. In particular {\cite[Theorem $5.1$]{Markin2015}},
\[
{\mathscr E}^{\{0\}}(A)=\bigcup_{\alpha>0}E_A(\Delta_\alpha)X,
\] 
where
\begin{equation*}
\Delta_\alpha:=\left\{\lambda\in\C\,\middle|\,|\lambda|\le \alpha \right\},\ \alpha>0.
\end{equation*}

We also need the following characterization of a particular weak solution's of equation \eqref{1} with a scalar type spectral operator $A$ in a complex Banach space being strongly Gevrey ultradifferentiable on a subinterval $I$ of $[0,\infty)$.

\begin{prop}[{\cite[Proposition $3.2$]{Markin2018(4)}}]\label{particular}\ \\
Let $A$ be a scalar type spectral operator in a complex Banach space $(X,\|\cdot\|)$, $0\le \beta<\infty$, and $I$ be a subinterval of $[0,\infty)$. Then the restriction of
a weak solution $y(\cdot)$ of equation \eqref{1} to $I$ belongs to the Gevrey class ${\mathscr E}^{\{\beta\}}(I,X)$
\textup{(${\mathscr E}^{(\beta)}(I,X)$)} iff, for each $t\in I$, 
\begin{equation*}
y(t) \in {\mathscr E}^{\{\beta\}}(A)
\ \textup{(${\mathscr E}^{(\beta)}(A)$, respectively)},
\end{equation*}
in which case, for every $n\in\N$,
\begin{equation*}
y^{(n)}(t)=A^ny(t),\ t \in I.
\end{equation*}
\end{prop}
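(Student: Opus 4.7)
By Theorem \ref{GWS}, $y(t)=e^{tA}f$ for some $f \in \bigcap_{t\ge 0}D(e^{tA})$ in the sense of the Borel operational calculus. The proof splits naturally into establishing the identity $y^{(n)}(t)=A^n y(t)$ on $I$, transferring Gevrey bounds between $\|y^{(n)}(t)\|$ and $\|A^n y(t)\|$, and promoting pointwise bounds to uniform ones on compact subintervals; both the Roumieu and Beurling cases proceed in parallel, differing only in the quantification of $\alpha$.

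\emph{Sufficiency.} Assume $y(t)\in \mathscr{E}^{\{\beta\}}(A)$ for every $t\in I$, so $y(t)\in C^\infty(A)$. By the multiplicativity of the o.c.\ on the common domain, $e^{(t+h)A}f=e^{hA}e^{tA}f$, and the difference quotient
\[
\frac{y(t+h)-y(t)}{h}=\int_{\sigma(A)}\frac{e^{h\lambda}-1}{h}\,e^{t\lambda}\,dE_A(\lambda)f
\]
passes to the strong limit as $h\to 0$ via Proposition \ref{prop} and \eqref{cond(ii)}--\eqref{cond(i)}, giving $y'(t)=Ay(t)$; induction yields $y^{(n)}(t)=A^n y(t)$. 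For the uniformization on $[a,b]\subseteq I$, I would use, for $\beta>0$, the description $\mathscr{E}^{\{\beta\}}(A)=\bigcup_{s>0}D(e^{s|A|^{1/\beta}})$ (and $\bigcap_{s>0}$ for Beurling) with the elementary inequality $r^n\le C_\beta s^{-\beta n}[n!]^\beta e^{sr^{1/\beta}}$, $r\ge 0$, plugged into $A^n y(t)=\int_{\sigma(A)}\lambda^n e^{t\lambda}\,dE_A(\lambda)f$ to bound $\|A^n y(t)\|$ by a multiple of $[n!]^\beta s^{-\beta n}\|e^{s|A|^{1/\beta}}e^{tA}f\|$, a factor continuous, hence bounded, in $t\in[a,b]$ by dominated convergence on the spectral integral via \eqref{bounded}. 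The case $\beta=0$ is handled analogously through $\mathscr{E}^{\{0\}}(A)=\bigcup_{\alpha>0}E_A(\Delta_\alpha)X$, which truncates the spectral support.

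\emph{Necessity.} Assume $y|_I\in\mathscr{E}^{\{\beta\}}(I,X)$, so $y$ is strongly $C^\infty$ on $I$. For each $t\in I$ and $g^*\in D(A^*)$, the weak-solution identity combined with strong differentiability gives $\langle y'(t),g^*\rangle=\langle y(t),A^*g^*\rangle$, and since $D(A^*)$ is weak-$\ast$ dense (via the dual spectral measure attached to the scalar-type operator $A$), this forces $y(t)\in D(A)$ with $Ay(t)=y'(t)$. An induction on $n$, exploiting the closedness of $A^n$ (itself a scalar-type spectral operator via the Borel o.c.) applied to the difference quotient $A^{n-1}[(y(t+h)-y(t))/h]$, then yields $y(t)\in D(A^n)$ with $A^n y(t)=y^{(n)}(t)$. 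The Gevrey bound on $\|y^{(n)}(t)\|$ thus transfers verbatim to $\|A^n y(t)\|$, placing $y(t)$ in $\mathscr{E}^{\{\beta\}}(A)$ (resp.\ $\mathscr{E}^{(\beta)}(A)$).

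The step I expect to be the main obstacle is the uniformization in the sufficiency direction for the Roumieu class: the constants $c(t),\alpha(t)$ in the pointwise Gevrey bound on $\|A^n y(t)\|$ may a priori depend on $t$, and extracting a single pair $(c,\alpha)$ valid on $[a,b]$ requires the joint estimate on the spectral integrals $A^n e^{tA}f$ and $e^{s|A|^{1/\beta}}e^{tA}f$ sketched above, with careful bookkeeping of how the domain of $e^{s|A|^{1/\beta}}$ interacts with that of $e^{tA}$ as $t$ ranges over $[a,b]$.
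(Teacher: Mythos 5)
The paper does not actually prove this proposition --- it imports it verbatim from \cite[Proposition 3.2]{Markin2018(4)} --- so your argument has to stand on its own. Your necessity direction is essentially sound: differentiating the weak-solution identity gives $\langle y^{(n)}(t),g^*\rangle=\langle y^{(n-1)}(t),A^*g^*\rangle$ for $g^*\in D(A^*)$, and the characterization of the (closed) graph of $A$ as the preannihilator of $\left\{(-A^*g^*,g^*)\,\middle|\,g^*\in D(A^*)\right\}$ (the precise form of your ``weak-$*$ density'' appeal) yields $y^{(n)}(t)=A^ny(t)$, after which the Gevrey bounds transfer verbatim. Your reduction of sufficiency to the estimate $\|A^ny(t)\|\le C_\beta s^{-\beta n}[n!]^\beta\,\sup_{\|g^*\|\le 1}\int_{\sigma(A)}e^{s|\lambda|^{1/\beta}}e^{t\Rep\lambda}\,dv(f,g^*,\lambda)$ is also the right idea.

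The genuine gap is exactly where you flag it, and it is not closed by what you wrote. The phrase ``a factor continuous, hence bounded, in $t\in[a,b]$'' presupposes that a \emph{single} $s>0$ satisfies $f\in D(e^{s|A|^{1/\beta}}e^{tA})$ for \emph{every} $t\in[a,b]$; the hypothesis only supplies, for each $t$, some $s(t)>0$ via \eqref{GC}, and these could a priori degenerate as $t$ varies --- one cannot invoke continuity of a quantity that may fail to be defined for a common $s$. The missing observation is elementary: split $\sigma(A)$ by the sign of $\Rep\lambda$. For $\Rep\lambda\le 0$ one has $e^{t\Rep\lambda}\le e^{a\Rep\lambda}$ on $[a,b]$, and for $\Rep\lambda\ge 0$ one has $e^{t\Rep\lambda}\le e^{b\Rep\lambda}$, whence
\begin{equation*}
\int\limits_{\sigma(A)}e^{s|\lambda|^{1/\beta}}e^{t\Rep\lambda}\,dv(f,g^*,\lambda)
\le\int\limits_{\sigma(A)}e^{s|\lambda|^{1/\beta}}e^{a\Rep\lambda}\,dv(f,g^*,\lambda)
+\int\limits_{\sigma(A)}e^{s|\lambda|^{1/\beta}}e^{b\Rep\lambda}\,dv(f,g^*,\lambda).
\end{equation*}
So a common $s$ is needed only at the two endpoints: take $s:=\min(s_a,s_b)$, where $y(a)\in D(e^{s_a|A|^{1/\beta}})$ and $y(b)\in D(e^{s_b|A|^{1/\beta}})$ (the domains only grow as $s$ decreases). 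The same domination carries condition (ii) of Proposition \ref{prop} from the endpoints to all $t\in[a,b]$, giving $f\in D(e^{s|A|^{1/\beta}}e^{tA})$ uniformly on $[a,b]$ and hence a single pair $(c,\alpha)$ with $\alpha$ of order $s^{-\beta}$; in the Beurling case every $s>0$ works at both endpoints, so the bound holds for every $\alpha>0$. A secondary caveat: your passage to the limit in the difference quotient (to get $y'(t)=Ay(t)$ in the first place) is itself the content of the $C^\infty$ analogue of this proposition, which \cite{Markin2018(4)} also uses as a cited black box; its dominated-convergence justification rests on the same two-endpoint majorization, so it is not independent of the step you left open.
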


\section{Gevrey Ultradifferentiability of Weak Solutions}

The case of the strong Gevrey ultradifferentiability of the weak solutions of equation \eqref{1} with a scalar type spectral operator in a complex Banach space on the \textit{open semi-axis} $(0,\infty)$, similarly to the analogous setup with a normal operator $A$ in a complex Hilbert space \cite{Markin2001(1)}, significantly differs from its counterpart over the \textit{closed semi-axis} $[0,\infty)$ studied in \cite{Markin2018(4)}. 

First, let us consider the Roumieu type strong Gevrey ultradifferentiability of order $\beta\ge 1$.

\begin{thm}\label{open1}
Let $A$ be a scalar type spectral operator in a complex Banach space $(X,\|\cdot\|)$ with spectral measure $E_A(\cdot)$ and $ 1\le \beta<\infty$.
Every weak solution of equation \eqref{1} belongs to the $\beta$th-order Roumieu type Gevrey class 
${\mathscr E}^{\{\beta \}}\left((0,\infty),X\right)$ iff there exist
$b_+>0$ and $ b_->0$ such that the set $\sigma(A)\setminus {\mathscr P}^\beta_{b_-,b_+}$,
where
\begin{equation*}
{\mathscr P}^\beta_{b_-,b_+}:=\left\{\lambda \in \C\, \middle|\,
\Rep\lambda \le -b_-|\Imp\lambda|^{1/\beta} 
\ \text{or}\ 
\Rep\lambda \ge b_+|\Imp\lambda|^{1/\beta} \right\},
\end{equation*}
is bounded 
(see Fig. \ref{fig:graph4}).

\begin{figure}[h]
\centering
\includegraphics[height=2in]{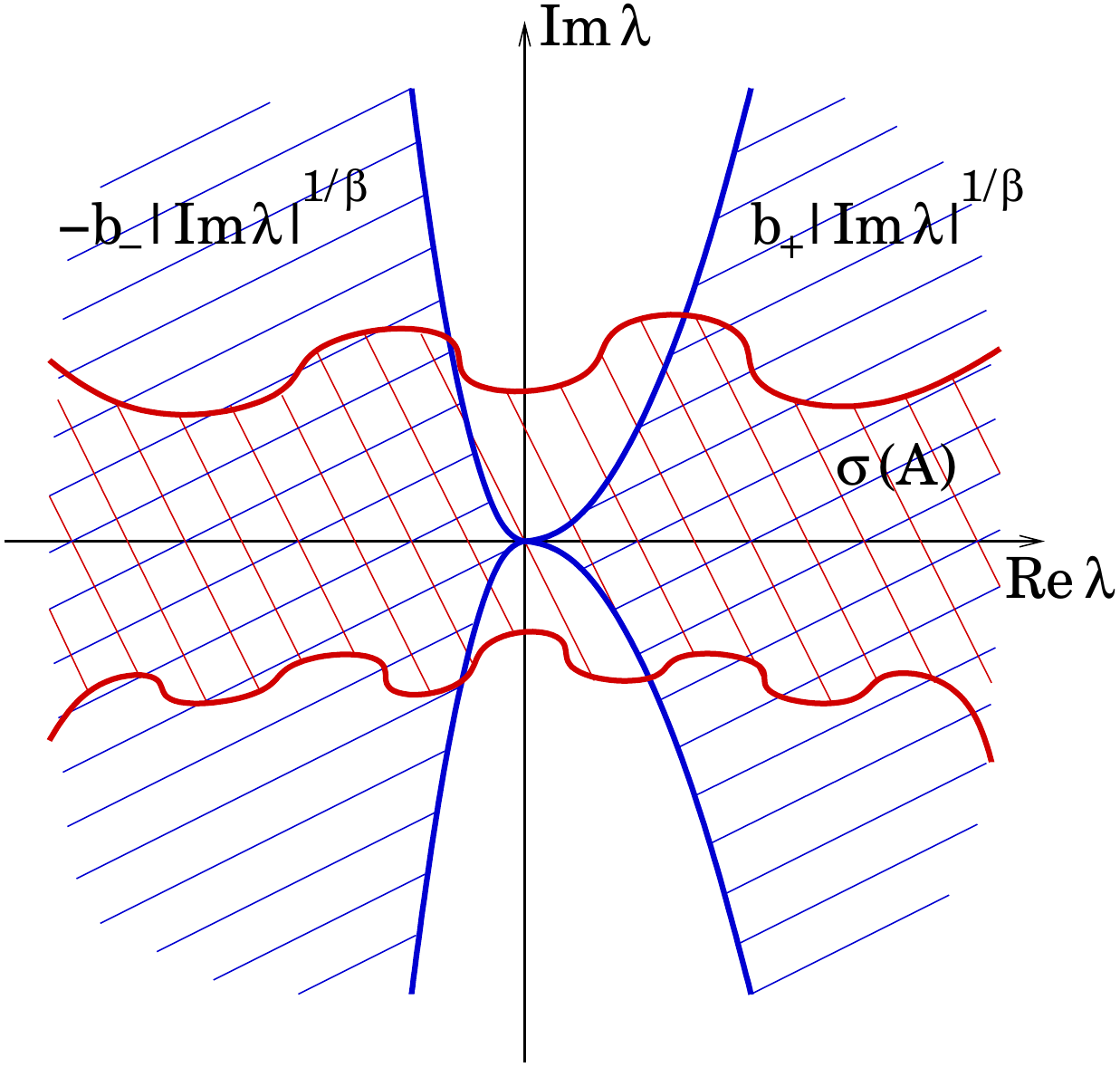}
\caption[]{}
\label{fig:graph4}
\end{figure}
\end{thm}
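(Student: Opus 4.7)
The plan is to reduce the Gevrey ultradifferentiability on $(0,\infty)$ to a spectral-integrability condition and then to read the geometry off that condition. By Theorem \ref{GWS} every weak solution has the form $y(t)=e^{tA}f$ with $f\in\bigcap_{t\ge 0}D(e^{tA})$; by Proposition \ref{particular}, $y\in {\mathscr E}^{\{\beta\}}((0,\infty),X)$ is equivalent to $e^{tA}f\in {\mathscr E}^{\{\beta\}}(A)$ for each $t>0$; and by the description \eqref{GC} extended to scalar type spectral operators, this amounts to $e^{tA}f\in D(e^{s|A|^{1/\beta}})$ for some $s=s(t)>0$. Proposition \ref{prop} applied to $\lambda\mapsto e^{s|\lambda|^{1/\beta}}$ and the vector $e^{tA}f$, together with the identity $dv(e^{tA}f,g^*,\lambda)=e^{t\Rep\lambda}\,dv(f,g^*,\lambda)$, then recasts the problem as the integrability, uniform in $\|g^*\|=1$, of $e^{s|\lambda|^{1/\beta}+t\Rep\lambda}$ against $v(f,g^*,\cdot)$.

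For \emph{sufficiency}, I would decompose $\sigma(A)$ into the bounded remainder $\sigma(A)\setminus{\mathscr P}^\beta_{b_-,b_+}$ and the two wings of ${\mathscr P}^\beta_{b_-,b_+}$. The bounded part is trivial. On the left wing a short computation using $|\Imp\lambda|\le(|\Rep\lambda|/b_-)^\beta$ yields $|\lambda|^{1/\beta}\le c_-|\Rep\lambda|$ for large $|\lambda|$ (this is where the hypothesis $\beta\ge 1$ enters), so choosing $0<s<t/c_-$ makes $e^{s|\lambda|^{1/\beta}+t\Rep\lambda}$ exponentially small and the integral is dominated by a constant multiple of $\|f\|\|g^*\|$ via \eqref{tv}. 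On the right wing analogously $|\lambda|^{1/\beta}\le c_+\Rep\lambda$, so for $0<s<t/c_+$ one has $s|\lambda|^{1/\beta}+t\Rep\lambda\le 2t\Rep\lambda$, and integrability follows from $f\in D(e^{2tA})$ via \eqref{cond(i)}. Replacing $s$ by a slightly smaller value in the same estimates also verifies condition (ii) of Proposition \ref{prop}.

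For \emph{necessity}, I would argue contrapositively. If for every $b_\pm>0$ the set $\sigma(A)\setminus{\mathscr P}^\beta_{b_-,b_+}$ is unbounded, I extract $\lambda_k\in\sigma(A)$ with $|\lambda_k|\ge k$ and $|\Rep\lambda_k|<|\Imp\lambda_k|^{1/\beta}/k$ (which forces $|\Imp\lambda_k|\to\infty$), together with pairwise disjoint open neighborhoods $\Delta_k\ni\lambda_k$ on which these inequalities persist up to a constant factor. Since $\lambda_k\in\operatorname{supp}E_A(\cdot)$, one can pick unit vectors $e_k\in E_A(\Delta_k)X$ and, by Hahn-Banach, functionals $g_k^*\in X^*$ of unit norm with $\langle e_k,g_k^*\rangle\ge 1/M$, and form $f:=\sum_k c_k e_k$ with positive scalars $c_k$ calibrated so that $f\in\bigcap_{t\ge 0}D(e^{tA})$; this is possible because on $\Delta_k$ the growth of $|e^{t\lambda}|$ is at most $e^{t|\Imp\lambda_k|^{1/\beta}/k}$, which is subexponential in $|\lambda_k|^{1/\beta}$. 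Using \eqref{decompose} to isolate each $\Delta_k$-contribution, one then shows that for any prescribed $s,t_0>0$ the Proposition \ref{prop}(i) integral for $e^{t_0A}f$ against $g_k^*$ diverges along a subsequence, contradicting $e^{t_0A}f\in D(e^{s|A|^{1/\beta}})$ for any $s>0$ and thus $y\in {\mathscr E}^{\{\beta\}}((0,\infty),X)$.

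The principal obstacle is the calibration of the coefficients $c_k$: they must decay fast enough to place $f$ in $D(e^{tA})$ for \emph{every} $t\ge 0$, yet slowly enough that the Roumieu ``some $s>0$'' quantifier is defeated at a prescribed $t_0>0$. That such a balance exists is precisely the content of the hypothesis $|\Rep\lambda_k|\ll|\Imp\lambda_k|^{1/\beta}$, which makes $|e^{t_0\lambda_k}|$ negligible relative to $e^{s|\lambda_k|^{1/\beta}}$. A secondary subtlety, relative to the Hilbert-space precursor in \cite{Markin2001(1)}, is the absence of an inner product: the detecting functionals $g_k^*$ must be produced by Hahn-Banach and the constant $M$ from \eqref{bounded} must be carried through the estimates.
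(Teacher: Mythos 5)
Your reduction of the problem to spectral integrability (Theorem \ref{GWS} $\to$ Proposition \ref{particular} $\to$ \eqref{GC} $\to$ Proposition \ref{prop}) and your sufficiency argument coincide with the paper's: the same decomposition into the bounded remainder and the two wings of ${\mathscr P}^\beta_{b_-,b_+}$, the same use of $\beta\ge 1$ to get $|\lambda|^{1/\beta}\le(1+b_\mp^{-\beta})^{1/\beta}|\Rep\lambda|$ on the wings, and the same choice of $s$ small relative to $t$ so that the left wing is controlled by \eqref{tv} and the right wing by $f\in\bigcap_{t\ge 0}D(e^{tA})$. That half is sound.

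The necessity half has a genuine gap in the step where you conclude $e^{t_0A}f\notin D(e^{s|A|^{1/\beta}})$. You propose unit functionals $g_k^*$ with $\langle e_k,g_k^*\rangle\ge 1/M$ and claim ``the Proposition \ref{prop}(i) integral for $e^{t_0A}f$ against $g_k^*$ diverges along a subsequence.'' For a \emph{fixed} $g_k^*$, the only contribution you control is the one from $\Delta_k$, via \eqref{decompose}, and that is a single finite term $c_k e^{s|\lambda_k|^{1/\beta}}|\langle e_k,g_k^*\rangle|$ (up to constants); nothing forces the integral over all of $\sigma(A)$ against that one functional to be infinite. Condition (i) of Proposition \ref{prop} is violated only by exhibiting \emph{one} functional with an infinite integral. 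The paper resolves this by first proving the uniform lower bound $d_n=\dist(e_n,\spa\{e_i: i\ne n\})\ge\varepsilon$ (which itself uses the boundedness \eqref{bounded} of the spectral measure), extracting via Hahn--Banach a \emph{biorthogonal} system $\langle e_i,e_j^*\rangle=\delta_{ij}d_i$, and assembling the single functional $h^*=\sum_k k^{-2}e_k^*$ with $\langle e_k,h^*\rangle\ge\varepsilon k^{-2}$, against which the series of disk contributions genuinely diverges. Your argument could alternatively be repaired by showing $\sup_k\int_{\sigma(A)}e^{s|\lambda|^{1/\beta}}e^{t_0\Rep\lambda}\,dv(f,g_k^*,\lambda)=\infty$ and invoking the uniform bound \eqref{cond(i)}, but neither repair is in your text.

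The second gap is the one you yourself flag: the coefficients $c_k$ are never exhibited, and the existence of a calibration that puts $f$ in $D(e^{tA})$ for \emph{every} $t\ge 0$ while defeating \emph{every} $s>0$ at $t_0=1$ is asserted, not proved. This is the heart of the contrapositive. The paper sidesteps the need for a single calibration by splitting into three cases according to $\{\Rep\lambda_n\}$ --- bounded, a subsequence tending to $+\infty$, a subsequence tending to $-\infty$ --- with simple explicit coefficients in each ($k^{-2}$, $e^{-n(k)\Rep\lambda_{n(k)}}$, $k^{-2}$ respectively) and with the divergence driven by the lower bounds $|\lambda|\ge k$, $|\Imp\lambda|\ge[n(k)^2\Rep\lambda]^\beta$, and $|\Imp\lambda|\ge[n(k)(-\Rep\lambda)]^\beta$ on the respective disks. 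A single unified choice such as $c_k=e^{-k^{-1/(2\beta)}|\lambda_k|^{1/\beta}}$ can be made to work (one checks $\sum_k c_k e^{t|\lambda_k|^{1/\beta}/k}<\infty$ for all $t$ and $c_k e^{(s-1/k)|\lambda_k|^{1/\beta}}\to\infty$ for all $s>0$, using $|\lambda_k|\ge k$), so your route is viable and arguably avoids the case analysis, but as written the decisive verification is missing.
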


\begin{proof}\

\textit{``If"} Part. Suppose that there exist $b_+>0$ and $ b_->0$ such that the set $\sigma(A)\setminus {\mathscr P}^\beta_{b_-,b_+}$
is \textit{bounded} and let $y(\cdot)$ be an arbitrary weak solution of equation \eqref{1}. 

By Theorem \ref{GWS}, 
\begin{equation*}
y(t)=e^{tA}f,\ t\ge 0,\ \text{with some}\
f \in \bigcap_{t\ge 0}D(e^{tA}).
\end{equation*}

Our purpose is to show that $y(\cdot)\in {\mathscr E}^{\{\beta \}}\left((0,\infty),X\right)$, which, by Proposition \ref{particular} and \eqref{GC}, is attained by showing that, for each $t>0$,
\[
y(t)\in {\mathscr E}^{\{\beta \}}\left(A\right)
=\bigcup_{s>0} D(e^{s|A|^{1/\beta}}).
\]

Let us proceed by proving that, for each $t>0$, there exists an $s>0$ such that
\[
y(t)\in D(e^{s|A|^{1/\beta}})
\] 
via Proposition \ref{prop}.

For an arbitrary $t>0$, let us set 
\begin{equation}\label{s}
s:=t(1+b_-^{-\beta})^{-1/\beta}>0.
\end{equation}

Then, for any $g^*\in X^*$,
\begin{multline}\label{first}
\int\limits_{\sigma(A)}e^{s|\lambda|^{1/\beta}}e^{t\Rep\lambda}\,dv(f,g^*,\lambda)
=\int\limits_{\sigma(A)\setminus{\mathscr P}_{b_-,b_+}^\beta}e^{s|\lambda|^{1/\beta}}e^{t\Rep\lambda}\,dv(f,g^*,\lambda)
\\
\shoveleft{
+\int\limits_{\left\{\lambda\in \sigma(A)\cap{\mathscr P}_{b_-,b_+}^\beta\,\middle|\,-1<\Rep\lambda<1 \right\}}e^{s|\lambda|^{1/\beta}}e^{t\Rep\lambda}\,dv(f,g^*,\lambda)
}\\
\shoveleft{
+\int\limits_{\left\{\lambda\in \sigma(A)\cap{\mathscr P}_{b_-,b_+}^\beta\,\middle|\,\Rep\lambda\ge 1 \right\}}e^{s|\lambda|^{1/\beta}}e^{t\Rep\lambda}\,dv(f,g^*,\lambda)
}\\
\hspace{1.2cm}
+\int\limits_{\left\{\lambda\in \sigma(A)\cap{\mathscr P}_{b_-,b_+}^\beta\,\middle|\,\Rep\lambda\le -1 \right\}}e^{s|\lambda|^{1/\beta}}e^{t\Rep\lambda}\,dv(f,g^*,\lambda)<\infty.
\hfill
\end{multline}

Indeed, 
\[
\int\limits_{\sigma(A)\setminus{\mathscr P}_{b_-,b_+}^\beta}e^{s|\lambda|^{1/\beta}}e^{t\Rep\lambda}\,dv(f,g^*,\lambda)<\infty
\]
and
\[
\int\limits_{\left\{\lambda\in \sigma(A)\cap{\mathscr P}_{b_-,b_+}^\beta\,\middle|\,-1<\Rep\lambda<1 \right\}}e^{s|\lambda|^{1/\beta}}e^{t\Rep\lambda}\,dv(f,g^*,\lambda)<\infty
\]
due to the boundedness of the sets
\[
\sigma(A)\setminus{\mathscr P}_{b_-,b_+}^\beta\ \text{and}\
\left\{\lambda\in \sigma(A)\cap{\mathscr P}_{b_-,b_+}^\beta\;\middle|\;-1<\Rep\lambda<1 \right\},
\]
the continuity of the integrated function 
on $\C$, and the finiteness of the measure $v(f,g^*,\cdot)$.

Further, for an arbitrary $t>0$, $s>0$ chosen as in \eqref{s}, and any $g^*\in X^*$,
\begin{multline}\label{interm}
\int\limits_{\left\{\lambda\in \sigma(A)\cap{\mathscr P}_{b_-,b_+}^\beta\,\middle|\,\Rep\lambda\ge 1 \right\}}e^{s|\lambda|^{1/\beta}}e^{t\Rep\lambda}\,dv(f,g^*,\lambda)
\\
\shoveleft{
\le\int\limits_{\left\{\lambda\in \sigma(A)\cap{\mathscr P}_{b_-,b_+}^\beta\,\middle|\,\Rep\lambda\ge 1 \right\}}e^{s\left[|\Rep\lambda|+|\Imp\lambda|\right]^{1/\beta}}e^{t\Rep\lambda}\,dv(f,g^*,\lambda)
}\\
\hfill
\text{since, for $\lambda\in\sigma(A)\cap{\mathscr P}_{b_-,b_+}^\beta$ with $\Rep\lambda\ge 1$, $b_+^{-\beta}\Rep\lambda^\beta\ge |\Imp\lambda|$;}
\\
\shoveleft{
\le 
\int\limits_{\left\{\lambda\in \sigma(A)\cap{\mathscr P}_{b_-,b_+}^\beta\,\middle|\,\Rep\lambda\ge 1 \right\}}e^{s\left[\Rep\lambda+b_+^{-\beta}\Rep\lambda^\beta\right]^{1/\beta}}e^{t\Rep\lambda}\,dv(f,g^*,\lambda)
}\\
\hfill
\text{since, in view of $\Rep\lambda\ge 1$ and $\beta\ge 1$, $\Rep\lambda^\beta\ge\Rep\lambda$;}
\\
\shoveleft{
\le 
\int\limits_{\left\{\lambda\in \sigma(A)\cap{\mathscr P}_{b_-,b_+}^\beta\,\middle|\,\Rep\lambda\ge 1 \right\}}e^{s\left(1+b_+^{-\beta}\right)^{1/\beta}\Rep\lambda}e^{t\Rep\lambda}\,dv(f,g^*,\lambda)
}\\
\shoveleft{
= \int\limits_{\left\{\lambda\in \sigma(A)\cap{\mathscr P}_{b_-,b_+}^\beta\,\middle|\,\Rep\lambda\ge 1 \right\}}e^{\left[s\left(1+b_+^{-\beta}\right)^{1/\beta}+t\right]\Rep\lambda}\,dv(f,g^*,\lambda)
}\\
\hfill
\text{since $f\in \bigcap\limits_{t\ge 0}D(e^{tA})$, by Proposition \ref{prop};}
\\
\hspace{1.2cm}
<\infty. 
\hfill
\end{multline}

Observe that, for the finiteness of the three preceding integrals, the special choice of $s>0$ is superfluous.

Finally, for an arbitrary $t>0$, $s>0$ chosen as in \eqref{s}, and any $g^*\in X^*$,
\begin{multline}\label{interm2}
\int\limits_{\left\{\lambda\in \sigma(A)\cap{\mathscr P}_{b_-,b_+}^\beta\,\middle|\,\Rep\lambda\le -1 \right\}}e^{s|\lambda|^{1/\beta}}e^{t\Rep\lambda}\,dv(f,g^*,\lambda)
\\
\shoveleft{
\le\int\limits_{\left\{\lambda\in \sigma(A)\cap{\mathscr P}_{b_-,b_+}^\beta\,\middle|\,\Rep\lambda\le -1 \right\}}e^{s\left[|\Rep\lambda|+|\Imp\lambda|\right]^{1/\beta}}e^{t\Rep\lambda}\,dv(f,g^*,\lambda)
}\\
\hfill
\text{since, for $\lambda\in\sigma(A)\cap{\mathscr P}_{b_-,b_+}^\beta$ with $\Rep\lambda\le -1$, $b_-^{-\beta}(-\Rep\lambda)^\beta\ge |\Imp\lambda|$;}
\\
\shoveleft{
\le 
\int\limits_{\left\{\lambda\in \sigma(A)\cap{\mathscr P}_{b_-,b_+}^\beta\,\middle|\,\Rep\lambda\le -1 \right\}}e^{s\left[-\Rep\lambda+b_-^{-\beta}(-\Rep\lambda)^\beta\right]^{1/\beta}}e^{t\Rep\lambda}\,dv(f,g^*,\lambda)
}\\
\hfill
\text{since, in view of $-\Rep\lambda\ge 1$ and $\beta\ge 1$, $(-\Rep\lambda)^\beta\ge-\Rep\lambda$;}
\\
\shoveleft{
\le 
\int\limits_{\left\{\lambda\in \sigma(A)\cap{\mathscr P}_{b_-,b_+}^\beta\,\middle|\,\Rep\lambda\le -1 \right\}}e^{s\left(1+b_-^{-\beta}\right)^{1/\beta}(-\Rep\lambda)}e^{t\Rep\lambda}\,dv(f,g^*,\lambda)
}\\
\shoveleft{
= \int\limits_{\left\{\lambda\in \sigma(A)\cap{\mathscr P}_{b_-,b_+}^\beta\,\middle|\,\Rep\lambda\le -1 \right\}}e^{\left[t-s\left(1+b_-^{-\beta}\right)^{1/\beta}\right]\Rep\lambda}\,dv(f,g^*,\lambda)
}\\
\hfill
\text{since $s:=t(1+b_-^{-\beta})^{-1/\beta}$ (see \eqref{s});}
\\
\shoveleft{
= \int\limits_{\left\{\lambda\in \sigma(A)\cap{\mathscr P}_{b_-,b_+}^\beta\,\middle|\,\Rep\lambda\le -1 \right\}}1\,dv(f,g^*,\lambda)
\le \int\limits_{\sigma(A)}1\,dv(f,g^*,\lambda)
=v(f,g^*,\sigma(A))
}\\
\hfill
\text{by \eqref{tv};}
\\
\hspace{1.2cm}
\le 4M\|f\|\|g^*\|<\infty. 
\hfill
\end{multline}

Also, for an arbitrary $t>0$, $s>0$ chosen as in \eqref{s}, and any $n\in\N$,
\begin{multline}\label{second}
\sup_{\{g^*\in X^*\,|\,\|g^*\|=1\}}
\int\limits_{\left\{\lambda\in\sigma(A)\,\middle|\,e^{s|\lambda|^{1/\beta}}e^{t\Rep\lambda}>n\right\}}
e^{s|\lambda|^{1/\beta}}e^{t\Rep\lambda}\,dv(f,g^*,\lambda)
\\
\shoveleft{
\le \sup_{\{g^*\in X^*\,|\,\|g^*\|=1\}}
\int\limits_{\left\{\lambda\in\sigma(A)\setminus{\mathscr P}_{b_-,b_+}^\beta\,\middle|\,e^{s|\lambda|^{1/\beta}}e^{t\Rep\lambda}>n\right\}}e^{s|\lambda|^{1/\beta}}e^{t\Rep\lambda}\,dv(f,g^*,\lambda)
}\\
\shoveleft{
+ \sup_{\{g^*\in X^*\,|\,\|g^*\|=1\}}
\int\limits_{\left\{\lambda\in\sigma(A)\cap{\mathscr P}_{b_-,b_+}^\beta\,\middle|\,-1<\Rep\lambda<1,\, e^{s|\lambda|^{1/\beta}}e^{t\Rep\lambda}>n\right\}}e^{s|\lambda|^{1/\beta}}e^{t\Rep\lambda}\,dv(f,g^*,\lambda)
}\\
\shoveleft{
+ \sup_{\{g^*\in X^*\,|\,\|g^*\|=1\}}
\int\limits_{\left\{\lambda\in\sigma(A)\cap{\mathscr P}_{b_-,b_+}^\beta\,\middle|\,\Rep\lambda\ge 1,\, e^{s|\lambda|^{1/\beta}}e^{t\Rep\lambda}>n\right\}}e^{s|\lambda|^{1/\beta}}e^{t\Rep\lambda}\,dv(f,g^*,\lambda)
}\\
\shoveleft{
+ \sup_{\{g^*\in X^*\,|\,\|g^*\|=1\}}
\int\limits_{\left\{\lambda\in\sigma(A)\cap{\mathscr P}_{b_-,b_+}^\beta\,\middle|\,\Rep\lambda\le -1,\, e^{s|\lambda|^{1/\beta}}e^{t\Rep\lambda}>n\right\}}e^{s|\lambda|^{1/\beta}}e^{t\Rep\lambda}\,dv(f,g^*,\lambda)
}\\
\hspace{1.2cm}
\to 0,\ n\to\infty.
\hfill
\end{multline}

Indeed, since, due to the boundedness of the sets
\[
\sigma(A)\setminus{\mathscr P}_{b_-,b_+}^\beta\ \text{and}\
\left\{\lambda\in\sigma(A)\cap{\mathscr P}_{b_-,b_+}^\beta\,\middle|\,-1<\Rep\lambda<1\right\}
\]
and the continuity of the integrated function on $\C$,
the sets
\[
\left\{\lambda\in\sigma(A)\setminus{\mathscr P}_{b_-,b_+}^\beta\,\middle|\,e^{s|\lambda|^{1/\beta}}e^{t\Rep\lambda}>n\right\}
\]
and 
\[
\left\{\lambda\in\sigma(A)\cap{\mathscr P}_{b_-,b_+}^\beta\,\middle|\,-1<\Rep\lambda<1,\, e^{s|\lambda|^{1/\beta}}e^{t\Rep\lambda}>n\right\}
\]
are \textit{empty} for all sufficiently large $n\in \N$,
we immediately infer that, for any $t>0$ and $s>0$ chosen as in \eqref{s},
\[
\lim_{n\to\infty}\sup_{\{g^*\in X^*\,|\,\|g^*\|=1\}}
\int\limits_{\left\{\lambda\in\sigma(A)\setminus{\mathscr P}_{b_-,b_+}^\beta\,\middle|\,e^{s|\lambda|^{1/\beta}}e^{t\Rep\lambda}>n\right\}}e^{s|\lambda|^{1/\beta}}e^{t\Rep\lambda}\,dv(f,g^*,\lambda)=0
\]
and
\[
\lim_{n\to\infty}\sup_{\{g^*\in X^*\,|\,\|g^*\|=1\}}
\int\limits_{\left\{\lambda\in\sigma(A)\cap{\mathscr P}_{b_-,b_+}^\beta\,\middle|\,-1<\Rep\lambda<1,\, e^{s|\lambda|^{1/\beta}}e^{t\Rep\lambda}>n\right\}}e^{s|\lambda|^{1/\beta}}e^{t\Rep\lambda}\,dv(f,g^*,\lambda)
=0.
\]

Further, for an arbitrary $t>0$, $s>0$ chosen as in \eqref{s}, and any $n\in\N$,
\begin{multline*}
\sup_{\{g^*\in X^*\,|\,\|g^*\|=1\}}
\int\limits_{\left\{\lambda\in\sigma(A)\cap{\mathscr P}_{b_-,b_+}^\beta\,\middle|\,\Rep\lambda\ge 1,\, e^{s|\lambda|^{1/\beta}}e^{t\Rep\lambda}>n\right\}}e^{s|\lambda|^{1/\beta}}e^{t\Rep\lambda}\,dv(f,g^*,\lambda)
\\
\hfill
\text{as in \eqref{interm};}
\\
\shoveleft{
\le \sup_{\{g^*\in X^*\,|\,\|g^*\|=1\}}
\int\limits_{\left\{\lambda\in\sigma(A)\cap{\mathscr P}_{b_-,b_+}^\beta\,\middle|\,\Rep\lambda\ge 1,\, e^{s|\lambda|^{1/\beta}}e^{t\Rep\lambda}>n\right\}}e^{\left[s\left(1+b_+^{-\beta}\right)^{1/\beta}+t\right]\Rep\lambda}\,dv(f,g^*,\lambda)
}\\
\hfill
\text{since $f\in \bigcap\limits_{t\ge 0}D(e^{tA})$, by \eqref{cond(ii)};}
\\
\shoveleft{
\le \sup_{\{g^*\in X^*\,|\,\|g^*\|=1\}}
}\\
\shoveleft{
4M\left\|E_A\left(\left\{\lambda\in\sigma(A)\cap{\mathscr P}_{b_-,b_+}^\beta\,\middle|\,\Rep\lambda\ge 1,\, e^{s|\lambda|^{1/\beta}}e^{t\Rep\lambda}>n\right\}\right)
e^{\left[s\left(1+b_+^{-\beta}\right)^{1/\beta}+t\right]A}f\right\|\|g^*\|
}\\
\shoveleft{
\le 4M\left\|E_A\left(\left\{\lambda\in\sigma(A)\cap{\mathscr P}_{b_-,b_+}^\beta\,\middle|\,\Rep\lambda\ge 1,\, e^{s|\lambda|^{1/\beta}}e^{t\Rep\lambda}>n\right\}\right)
e^{\left[s\left(1+b_+^{-\beta}\right)^{1/\beta}+t\right]A}f\right\|
}\\
\hfill
\text{by the strong continuity of the {\it s.m.};}
\\
\ \
\to 4M\left\|E_A\left(\emptyset\right)e^{\left[s\left(1+b_+^{-\beta}\right)^{1/\beta}+t\right]A}f\right\|=0,\ n\to\infty.
\hfill
\end{multline*}

Finally, for an arbitrary $t>0$, $s>0$ chosen as in \eqref{s}, and any $n\in\N$,
\begin{multline*}
\sup_{\{g^*\in X^*\,|\,\|g^*\|=1\}}
\int\limits_{\left\{\lambda\in\sigma(A)\cap{\mathscr P}_{b_-,b_+}^\beta\,\middle|\,\Rep\lambda\le -1,\, e^{s|\lambda|^{1/\beta}}e^{t\Rep\lambda}>n\right\}}e^{s|\lambda|^{1/\beta}}e^{t\Rep\lambda}\,dv(f,g^*,\lambda)
\\
\hfill
\text{as in \eqref{interm2};}
\\
\shoveleft{
\le \sup_{\{g^*\in X^*\,|\,\|g^*\|=1\}}
\int\limits_{\left\{\lambda\in\sigma(A)\cap{\mathscr P}_{b_-,b_+}^\beta\,\middle|\,\Rep\lambda\le -1,\, e^{s|\lambda|^{1/\beta}}e^{t\Rep\lambda}>n\right\}}e^{\left[t-s\left(1+b_-^{-\beta}\right)^{1/\beta}\right]\Rep\lambda}\,dv(f,g^*,\lambda)
}\\
\hfill
\text{since $s:=t(1+b_-^{-\beta})^{-1/\beta}$ (see \eqref{s});}
\\
\shoveleft{
=\sup_{\{g^*\in X^*\,|\,\|g^*\|=1\}}
\int\limits_{\left\{\lambda\in\sigma(A)\cap{\mathscr P}_{b_-,b_+}^\beta\,\middle|\,\Rep\lambda\le -1,\, e^{s|\lambda|^{1/\beta}}e^{t\Rep\lambda}>n\right\}}1\,dv(f,g^*,\lambda)
}\\
\hfill
\text{by \eqref{cond(ii)};}
\\
\shoveleft{
\le \sup_{\{g^*\in X^*\,|\,\|g^*\|=1\}}
4M\left\|E_A\left(\left\{\lambda\in\sigma(A)\cap{\mathscr P}_{b_-,b_+}^\beta\,\middle|\,\Rep\lambda\le -1,\, e^{s|\lambda|^{1/\beta}}e^{t\Rep\lambda}>n\right\}\right)
f\right\|\|g^*\|
}\\
\shoveleft{
\le 4M\left\|E_A\left(\left\{\lambda\in\sigma(A)\cap{\mathscr P}_{b_-,b_+}^\beta\,\middle|\,\Rep\lambda\le -1,\, e^{s|\lambda|^{1/\beta}}e^{t\Rep\lambda}>n\right\}\right)
f\right\|
}\\
\hfill
\text{by the strong continuity of the {\it s.m.};}
\\
\ \
\to 4M\left\|E_A\left(\emptyset\right)f\right\|=0,\ n\to\infty.
\hfill
\end{multline*}

By Proposition \ref{prop} and the properties of the \textit{o.c.} (see {\cite[Theorem XVIII.$2.11$ (f)]{Dun-SchIII}}), \eqref{first} and \eqref{second} jointly imply that, for any $t>0$,
\[
f\in D(e^{s|A|^{1/\beta}}e^{tA})
\]
with $s:=t(1+b_-^{-\beta})^{-1/\beta}>0$, and hence, in view of \eqref{GC}, for each $t>0$, 
\begin{equation*}
y(t)=e^{tA}f\in \bigcup_{s>0} D(e^{s|A|^{1/\beta}})
={\mathscr E}^{\{\beta\}}(A).
\end{equation*}

By Proposition \ref{particular}, we infer that
\begin{equation*}
y(\cdot) \in {\mathscr E}^{\{\beta\}}((0,\infty),X),
\end{equation*}
which completes the proof of the \textit{``if"} part.

\medskip
\textit{``Only if"} part. Let us prove this part {\it by contrapositive} assuming that, for any $b_+>0$ and $b_->0$, the set 
$\sigma(A)\setminus {\mathscr P}_{b_-,b_+}^\beta$ is \textit{unbounded}. In particular, this means that, for any $n\in \N$, unbounded is the set
\begin{equation*}
\sigma(A)\setminus {\mathscr P}^\beta_{n^{-1},n^{-2}}=
\left\{\lambda \in \sigma(A)\,\middle| 
-n^{-1}|\Imp\lambda|^{1/\beta}<\Rep\lambda < n^{-2}|\Imp\lambda|^{1/\beta}\right\}.
\end{equation*} 

Hence, we can choose a sequence $\left\{\lambda_n\right\}_{n=1}^\infty$  
of points in the complex plane as follows:
\begin{equation*}
\begin{split}
&\lambda_n \in \sigma(A),\ n\in \N,\\
&-n^{-1}|\Imp\lambda_n|^{1/\beta}<\Rep\lambda_n <n^{-2}|\Imp\lambda_n|^{1/\beta},\ n\in \N,\\
&\lambda_0:=0,\ |\lambda_n|>\max\left[n,|\lambda_{n-1}|\right],\ n\in \N.\\
\end{split}
\end{equation*}

The latter implies, in particular, that the points $\lambda_n$, $n\in\N$, are \textit{distinct} ($\lambda_i \neq \lambda_j$, $i\neq j$).

Since, for each $n\in \N$, the set
\begin{equation*}
\left\{ \lambda \in {\mathbb C}\,\middle|\, 
-n^{-1}|\Imp\lambda|^{1/\beta}<\Rep\lambda <n^{-2}|\Imp\lambda|^{1/\beta},\
|\lambda|>\max\bigl[n,|\lambda_{n-1}|\bigr]\right\}
\end{equation*}
is {\it open} in $\C$, along with the point $\lambda_n$, it contains an {\it open disk}
\begin{equation*}
\Delta_n:=\left\{\lambda \in \C\, \middle|\,|\lambda-\lambda_n|<\varepsilon_n \right\}
\end{equation*}
centered at $\lambda_n$ of some radius $\varepsilon_n>0$, i.e., for each $\lambda \in \Delta_n$,
\begin{equation}\label{disks1}
-n^{-1}|\Imp\lambda|^{1/\beta}<\Rep\lambda < n^{-2}|\Imp\lambda|^{1/\beta}\ \text{and}\ |\lambda|>\max\bigl[n,|\lambda_{n-1}|\bigr].
\end{equation}

Furthermore, we can regard the radii of the disks to be small enough so that
\begin{equation}\label{radii1}
\begin{split}
&0<\varepsilon_n<\dfrac{1}{n},\ n\in\N,\ \text{and}\\
&\Delta_i \cap \Delta_j=\emptyset,\ i\neq j
\quad \text{(i.e., the disks are {\it pairwise disjoint})}.
\end{split}
\end{equation}

Whence, by the properties of the {\it s.m.}, 
\begin{equation*}
E_A(\Delta_i)E_A(\Delta_j)=0,\ i\neq j,
\end{equation*}
where $0$ stands for the \textit{zero operator} on $X$.

Observe also, that the subspaces $E_A(\Delta_n)X$, $n\in \N$, are \textit{nontrivial} since
\[
\Delta_n \cap \sigma(A)\neq \emptyset,\ n\in\N,
\]
with $\Delta_n$ being an \textit{open set} in $\C$. 

By choosing a unit vector $e_n\in E_A(\Delta_n)X$ for each $n\in\N$, we obtain a sequence 
$\left\{e_n\right\}_{n=1}^\infty$ in $X$ such that
\begin{equation}\label{ortho1}
\|e_n\|=1,\ n\in\N,\ \text{and}\ E_A(\Delta_i)e_j=\delta_{ij}e_j,\ i,j\in\N,
\end{equation}
where $\delta_{ij}$ is the \textit{Kronecker delta}.

As is easily seen, \eqref{ortho1} implies that the vectors $e_n$, $n\in \N$, are \textit{linearly independent}.

Furthermore, there exists an $\varepsilon>0$ such that
\begin{equation}\label{dist1}
d_n:=\dist\left(e_n,\spa\left(\left\{e_i\,|\,i\in\N,\ i\neq n\right\}\right)\right)\ge\varepsilon,\ n\in\N.
\end{equation}

Indeed, the opposite implies the existence of a subsequence $\left\{d_{n(k)}\right\}_{k=1}^\infty$ such that
\begin{equation*}
d_{n(k)}\to 0,\ k\to\infty.
\end{equation*}

Then, by selecting a vector
\[
f_{n(k)}\in 
\spa\left(\left\{e_i\,|\,i\in\N,\ i\neq n(k)\right\}\right),\ k\in\N,
\] 
such that 
\[
\|e_{n(k)}-f_{n(k)}\|<d_{n(k)}+1/k,\ k\in\N,
\]
we arrive at
\begin{multline*}
1=\|e_{n(k)}\|
\hfill
\text{since, by \eqref{ortho1}, 
$E_A(\Delta_{n(k)})f_{n(k)}=0$;}
\\
\shoveleft{
=\|E_A(\Delta_{n(k)})(e_{n(k)}-f_{n(k)})\|\
\le \|E_A(\Delta_{n(k)})\|\|e_{n(k)}-f_{n(k)}\|
\hfill
\text{by \eqref{bounded};}
}\\
\ \
\le M\|e_{n(k)}-f_{n(k)}\|\le M\left[d_{n(k)}+1/k\right]
\to 0,\ k\to\infty,
\hfill
\end{multline*}
which is a \textit{contradiction} proving \eqref{dist1}. 

As follows from the {\it Hahn-Banach Theorem}, for any $n\in\N$, there is an $e^*_n\in X^*$ such that 
\begin{equation}\label{H-B1}
\|e_n^*\|=1,\ n\in\N,\ \text{and}\ \langle e_i,e_j^*\rangle=\delta_{ij}d_i,\ i,j\in\N.
\end{equation}

Let us consider separately the two possibilities concerning the sequence of the real parts $\{\Rep\lambda_n\}_{n=1}^\infty$: its being \textit{bounded} or \textit{unbounded}. 

First, suppose that the sequence $\{\Rep\lambda_n\}_{n=1}^\infty$ is \textit{bounded}, i.e., there exists an $\omega>0$ such that
\begin{equation}\label{bounded1}
|\Rep\lambda_n| \le \omega,\ n\in\N,
\end{equation}
and consider the element
\begin{equation*}
f:=\sum_{k=1}^\infty k^{-2}e_k\in X,
\end{equation*}
which is well defined since $\left\{k^{-2}\right\}_{k=1}^\infty\in l_1$ ($l_1$ is the space of absolutely summable sequences) and $\|e_k\|=1$, $k\in\N$ (see \eqref{ortho1}).

In view of \eqref{ortho1}, by the properties of the \textit{s.m.},
\begin{equation}\label{vectors1}
E_A(\cup_{k=1}^\infty\Delta_k)f=f\ \text{and}\ E_A(\Delta_k)f=k^{-2}e_k,\ k\in\N.
\end{equation}

For an arbitrary $t\ge 0$ and any $g^*\in X^*$,
\begin{multline}\label{first1}
\int\limits_{\sigma(A)}e^{t\Rep\lambda}\,dv(f,g^*,\lambda)
\hfill \text{by \eqref{vectors1};}
\\
\shoveleft{
=\int\limits_{\sigma(A)} e^{t\Rep\lambda}\,d v(E_A(\cup_{k=1}^\infty \Delta_k)f,g^*,\lambda)
\hfill
\text{by \eqref{decompose};}
}\\
\shoveleft{
=\sum_{k=1}^\infty\int\limits_{\sigma(A)\cap\Delta_k}e^{t\Rep\lambda}\,dv(E_A(\Delta_k)f,g^*,\lambda)
\hfill 
\text{by \eqref{vectors1};}
}\\
\shoveleft{
=\sum_{k=1}^\infty k^{-2}\int\limits_{\sigma(A)\cap\Delta_k}e^{t\Rep\lambda}\,dv(e_k,g^*,\lambda)
}\\
\hfill
\text{since, for $\lambda\in \Delta_k$, by \eqref{bounded1} and \eqref{radii1},}\ 
\Rep\lambda=\Rep\lambda_k+(\Rep\lambda-\Rep\lambda_k)
\\
\hfill
\le \Rep\lambda_k+|\lambda-\lambda_k|\le \omega+\varepsilon_k\le \omega+1;
\\
\shoveleft{
\le e^{t(\omega+1)}\sum_{k=1}^\infty k^{-2}\int\limits_{\sigma(A)\cap\Delta_k}1\,dv(e_k,g^*,\lambda)
= e^{t(\omega+1)}\sum_{k=1}^\infty k^{-2}v(e_k,g^*,\Delta_k)
}\\
\hfill
\text{by \eqref{tv};}
\\
\hspace{1.2cm}
\le e^{t(\omega+1)}\sum_{k=1}^\infty k^{-2}4M\|e_k\|\|g^*\|
= 4Me^{t(\omega+1)}\|g^*\|\sum_{k=1}^\infty k^{-2}<\infty.
\hfill
\end{multline} 

Similarly, for an arbitrary $t\ge 0$ and any $n\in\N$,
\begin{multline}\label{second1}
\sup_{\{g^*\in X^*\,|\,\|g^*\|=1\}}
\int\limits_{\left\{\lambda\in\sigma(A)\,\middle|\,e^{t\Rep\lambda}>n\right\}} 
e^{t\Rep\lambda}\,dv(f,g^*,\lambda)
\\
\shoveleft{
\le 
\sup_{\{g^*\in X^*\,|\,\|g^*\|=1\}}e^{t(\omega+1)}\sum_{k=1}^\infty k^{-2}
\int\limits_{\left\{\lambda\in\sigma(A)\,\middle|\,e^{t\Rep\lambda}>n\right\}\cap \Delta_k}1\,dv(e_k,g^*,\lambda) 
}\\
\hfill \text{by \eqref{vectors1};}
\\
\shoveleft{
=e^{t(\omega+1)}\sup_{\{g^*\in X^*\,|\,\|g^*\|=1\}}\sum_{k=1}^\infty 
\int\limits_{\left\{\lambda\in\sigma(A)\,\middle|\,e^{t\Rep\lambda}>n\right\}\cap \Delta_k}1\,dv(E_A(\Delta_k)f,g^*,\lambda) 
}\\
\hfill \text{by \eqref{decompose};}
\\
\shoveleft{
= e^{t(\omega+1)}\sup_{\{g^*\in X^*\,|\,\|g^*\|=1\}}
\int\limits_{\{\lambda\in\sigma(A)\,|\,e^{t\Rep\lambda}>n\}}1\,dv(E_A(\cup_{k=1}^\infty\Delta_k)f,g^*,\lambda)
}\\
\hfill \text{by \eqref{vectors1};}
\\
\shoveleft{
= e^{t(\omega+1)}\sup_{\{g^*\in X^*\,|\,\|g^*\|=1\}}
\int\limits_{\{\lambda\in\sigma(A)\,|\,e^{t\Rep\lambda}>n\}}1\,dv(f,g^*,\lambda)
\hfill
\text{by \eqref{cond(ii)};}
}\\
\shoveleft{
\le e^{t(\omega+1)}\sup_{\{g^*\in X^*\,|\,\|g^*\|=1\}}4M\left\|E_A\left(\left\{\lambda\in\sigma(A)\,\middle|\,e^{t\Rep\lambda}>n\right\}\right)f\right\|\|g^*\|
}\\
\shoveleft{
\le 4Me^{t(\omega+1)}\left\|E_A\left(\left\{\lambda\in\sigma(A)\,\middle|\,e^{t\Rep\lambda}>n\right\}\right)f\right\|
}\\
\hfill
\text{by the strong continuity of the {\it s.m.};}
\\
\hspace{1.2cm}
\to 4Me^{t(\omega+1)}\left\|E_A\left(\emptyset\right)f\right\|=0,\ n\to\infty.
\hfill
\end{multline}

By Proposition \ref{prop}, \eqref{first1} and \eqref{second1} jointly imply that 
\[
f\in \bigcap\limits_{t\ge 0}D(e^{tA}),
\]
and hence, by Theorem \ref{GWS},
\[
y(t):=e^{tA}f,\ t\ge 0,
\]
is a weak solution of equation \eqref{1}.

Let
\begin{equation}\label{functional1}
h^*:=\sum_{k=1}^\infty k^{-2}e_k^*\in X^*,
\end{equation}
the functional being well defined since $\{k^{-2}\}_{k=1}^\infty\in l_1$ and $\|e_k^*\|=1$, $k\in\N$ (see \eqref{H-B1}).

In view of \eqref{H-B1} and \eqref{dist1}, we have:
\begin{equation}\label{funct-dist1}
\langle e_k,h^*\rangle=\langle e_k,k^{-2}e_k^*\rangle=d_k k^{-2}\ge \varepsilon k^{-2},\ k\in\N.
\end{equation}

For any $s>0$,
\begin{multline}\label{notin1}
\int\limits_{\sigma(A)}e^{s|\lambda|^{1/\beta}}e^{\Rep\lambda}\,dv(f,h^*,\lambda)
\hfill
\text{by \eqref{decompose} as in \eqref{first1};}
\\
\shoveleft{
=\sum_{k=1}^\infty k^{-2}\int\limits_{\sigma(A)\cap \Delta_k}e^{s|\lambda|^{1/\beta}}e^{\Rep\lambda}\,dv(e_k,h^*,\lambda)
}\\
\hfill
\text{since, for $\lambda\in \Delta_k$, by
\eqref{disks1}, \eqref{bounded1}, and \eqref{radii1}, 
$|\lambda|\ge k$ and}\ \Rep\lambda=
\\
\hfill
\Rep\lambda_k-(\Rep\lambda_k-\Rep\lambda)
\ge \Rep\lambda_k-|\Rep\lambda_k-\Rep\lambda|\ge -\omega-\varepsilon_k\ge -\omega-1;
\\
\shoveleft{
\ge \sum_{k=1}^\infty k^{-2}e^{sk^{1/\beta}}e^{-(\omega+1)} v(e_k,h^*,\Delta_k)\ge\sum_{k=1}^\infty e^{-(\omega+1)}k^{-2}e^{sk^{1/\beta}}|\langle E_A(\Delta_k)e_k,h^*\rangle|
}\\
\hfill
\text{by \eqref{ortho1} and \eqref{funct-dist1};}
\\
\hspace{1.2cm}
\ge \sum_{k=1}^\infty \varepsilon e^{-(\omega+1)} k^{-4}e^{sk^{1/\beta}}=\infty.
\hfill
\end{multline} 

By Proposition \ref{prop} and the properties of the \textit{o.c.} (see {\cite[Theorem XVIII.$2.11$ (f)]{Dun-SchIII}}), \eqref{notin1} implies that, for any $s>0$,
\[
f\notin D(e^{s|A|^{1/\beta}}e^{A}),
\]
and hence, in view of \eqref{GC},
\begin{equation*}
y(1)=e^{A}f\notin \bigcup_{s>0} D(e^{s|A|^{1/\beta}})
={\mathscr E}^{\{\beta\}}(A).
\end{equation*}

By Proposition \ref{particular}, we infer that the weak solution $y(t)=e^{tA}f$, $t\ge 0$, 
of equation \eqref{1} does not belong to the Roumieu
type Gevrey class ${\mathscr E}^{\{\beta \}}\left( (0,\infty),X\right)$, which completes our consideration of the case of the sequence's $\{\Rep\lambda_n\}_{n=1}^\infty$ being \textit{bounded}. 

Now, suppose that the sequence $\{\Rep\lambda_n\}_{n=1}^\infty$
is \textit{unbounded}. 

Therefore, there is a subsequence $\{\Rep\lambda_{n(k)}\}_{k=1}^\infty$ such that
\[
\Rep\lambda_{n(k)}\to \infty \ \text{or}\ \Rep\lambda_{n(k)}\to -\infty,\ k\to \infty.
\]
Let us consider separately each of the two cases.

First, suppose that 
\[
\Rep\lambda_{n(k)}\to \infty,\ k\to \infty.
\] 
Then, without loss of generality, we can regard that
\begin{equation}\label{infinity}
\Rep\lambda_{n(k)} \ge k,\ k\in\N.
\end{equation}

Consider the elements
\begin{equation*}
f:=\sum_{k=1}^\infty e^{-n(k)\Rep\lambda_{n(k)}}e_{n(k)}\in X
\ \text{and}\ h:=\sum_{k=1}^\infty e^{-\frac{n(k)}{2}\Rep\lambda_{n(k)}}e_{n(k)}\in X,
\end{equation*}
well defined since, by \eqref{infinity},
\[
\left\{e^{-n(k)\Rep\lambda_{n(k)}}\right\}_{k=1}^\infty,
\left\{e^{-\frac{n(k)}{2}\Rep\lambda_{n(k)}}\right\}_{k=1}^\infty
\in l_1
\]
and $\|e_{n(k)}\|=1$, $k\in\N$ (see \eqref{ortho1}).

By \eqref{ortho1},
\begin{equation}\label{subvectors1}
E_A(\cup_{k=1}^\infty\Delta_{n(k)})f=f\ \text{and}\
E_A(\Delta_{n(k)})f=e^{-n(k)\Rep\lambda_{n(k)}}e_{n(k)},\
k\in\N,
\end{equation}
and
\begin{equation}\label{subvectors12}
E_A(\cup_{k=1}^\infty\Delta_{n(k)})h=h\ \text{and}\
E_A(\Delta_{n(k)})h=e^{-\frac{n(k)}{2}\Rep\lambda_{n(k)}}e_{n(k)},\ k\in\N.
\end{equation}

For an arbitrary $t\ge 0$ and any $g^*\in X^*$, 
\begin{multline}\label{first2}
\int\limits_{\sigma(A)}e^{t\Rep\lambda}\,dv(f,g^*,\lambda)
\hfill
\text{by \eqref{decompose} as in \eqref{first1};}
\\
\shoveleft{
=\sum_{k=1}^\infty e^{-n(k)\Rep\lambda_{n(k)}}\int\limits_{\sigma(A)\cap\Delta_{n(k)}}e^{t\Rep\lambda}\,dv(e_{n(k)},g^*,\lambda)
}\\
\hfill
\text{since, for $\lambda\in \Delta_{n(k)}$, by \eqref{radii1},}\ \Rep\lambda
=\Rep\lambda_{n(k)}+(\Rep\lambda-\Rep\lambda_{n(k)})
\\
\hfill
\le \Rep\lambda_{n(k)}+|\lambda-\lambda_{n(k)}|\le \Rep\lambda_{n(k)}+1;
\\
\shoveleft{
\le \sum_{k=1}^\infty e^{-n(k)\Rep\lambda_{n(k)}}
e^{t(\Rep\lambda_{n(k)}+1)}
\int\limits_{\sigma(A)\cap\Delta_{n(k)}}1\,dv(e_{n(k)},g^*,\lambda)
}\\
\shoveleft{
= e^t\sum_{k=1}^\infty e^{-[n(k)-t]\Rep\lambda_{n(k)}}v(e_{n(k)},g^*,\Delta_{n(k)})
\hfill
\text{by \eqref{tv};}
}\\
\shoveleft{
\le e^t\sum_{k=1}^\infty e^{-[n(k)-t]\Rep\lambda_{n(k)}}4M\|e_{n(k)}\|\|g^*\|
= 4Me^t\|g^*\|\sum_{k=1}^\infty e^{-[n(k)-t]\Rep\lambda_{n(k)}}
}\\
\hspace{1.2cm}
<\infty.
\hfill
\end{multline}

Indeed, for all $k\in \N$ sufficiently large so that
\[
n(k)\ge t+1,
\]
in view of \eqref{infinity}, 
\[
e^{-[n(k)-t]\Rep\lambda_{n(k)}}\le e^{-k}.
\]

Similarly, for an arbitrary $t\ge 0$ and any $n\in\N$,
\begin{multline}\label{second2}
\sup_{\{g^*\in X^*\,|\,\|g^*\|=1\}}
\int\limits_{\left\{\lambda\in\sigma(A)\,\middle|\,e^{t\Rep\lambda}>n\right\}}e^{t\Rep\lambda}\,dv(f,g^*,\lambda)
\\
\shoveleft{
\le \sup_{\{g^*\in X^*\,|\,\|g^*\|=1\}}e^t\sum_{k=1}^\infty e^{-[n(k)-t]\Rep\lambda_{n(k)}}
\int\limits_{\left\{\lambda\in\sigma(A)\,\middle|\,e^{t\Rep\lambda}>n\right\}\cap \Delta_{n(k)}}1\,dv(e_{n(k)},g^*,\lambda)
}\\
\shoveleft{
=e^t\sup_{\{g^*\in X^*\,|\,\|g^*\|=1\}}\sum_{k=1}^\infty e^{-\left[\frac{n(k)}{2}-t\right]\Rep\lambda_{n(k)}}
e^{-\frac{n(k)}{2}\Rep\lambda_{(k)}}
}\\
\shoveleft{
\int\limits_{\left\{\lambda\in\sigma(A)\,\middle|\,e^{t\Rep\lambda}>n\right\}\cap \Delta_{n(k)}}1\,dv(e_{n(k)},g^*,\lambda)
}\\
\hfill
\text{since, by \eqref{infinity}, there is an $L>0$ such that
$e^{-\left[\frac{n(k)}{2}-t\right]\Rep\lambda_{n(k)}}\le L$, $k\in\N$;}
\\
\shoveleft{
\le Le^t\sup_{\{g^*\in X^*\,|\,\|g^*\|=1\}}\sum_{k=1}^\infty e^{-\frac{n(k)}{2}\Rep\lambda_{n(k)}}
\int\limits_{\left\{\lambda\in\sigma(A)\,\middle|\,e^{t\Rep\lambda}>n\right\}\cap \Delta_{n(k)}}1\,dv(e_{n(k)},g^*,\lambda)
}\\
\hfill
\text{by \eqref{subvectors12};}
\\
\shoveleft{
= Le^t\sup_{\{g^*\in X^*\,|\,\|g^*\|=1\}}\sum_{k=1}^\infty
\int\limits_{\left\{\lambda\in\sigma(A)\,\middle|\,e^{t\Rep\lambda}>n\right\}\cap \Delta_{n(k)}}1\,dv(E_A(\Delta_{n(k)})h,g^*,\lambda)
}\\
\hfill
\text{by \eqref{decompose};}
\\
\shoveleft{
= Le^t\sup_{\{g^*\in X^*\,|\,\|g^*\|=1\}}
\int\limits_{\left\{\lambda\in\sigma(A)\,\middle|\,e^{t\Rep\lambda}>n\right\}}1\,dv(E_A(\cup_{k=1}^\infty\Delta_{n(k)})h,g^*,\lambda)
}\\
\hfill
\text{by \eqref{subvectors12};}
\\
\shoveleft{
=Le^t\sup_{\{g^*\in X^*\,|\,\|g^*\|=1\}}\int\limits_{\{\lambda\in\sigma(A)\,|\,e^{t\Rep\lambda}>n\}}1\,dv(h,g^*,\lambda)
\hfill
\text{by \eqref{cond(ii)};}
}\\
\shoveleft{
\le Le^t\sup_{\{g^*\in X^*\,|\,\|g^*\|=1\}}4M
\left\|E_A\left(\left\{\lambda\in\sigma(A)\,\middle|\,e^{t\Rep\lambda}>n\right\}\right)h\right\|\|g^*\|
}\\
\shoveleft{
\le 4LMe^t\|E_A(\{\lambda\in\sigma(A)\,|\,e^{t\Rep\lambda}>n\})h\|
}\\
\hfill
\text{by the strong continuity of the {\it s.m.};}
\\
\hspace{1.2cm}
\to 4LMe^t\left\|E_A\left(\emptyset\right)h\right\|=0,\ n\to\infty.
\hfill
\end{multline}

By Proposition \ref{prop}, \eqref{first2} and \eqref{second2} jointly imply that 
\[
f\in \bigcap\limits_{t\ge 0}D(e^{tA}),
\]
and hence, by Theorem \ref{GWS},
\[
y(t):=e^{tA}f,\ t\ge 0,
\]
is a weak solution of equation \eqref{1}.

Since, for any $\lambda \in \Delta_{n(k)}$, $k\in \N$, by \eqref{radii1}, \eqref{infinity},
\begin{multline*}
\Rep\lambda =\Rep\lambda_{n(k)}-(\Rep\lambda_{n(k)}-\Rep\lambda)
\ge
\Rep\lambda_{n(k)}-|\Rep\lambda_{n(k)}-\Rep\lambda|
\\
\ \ \
\ge 
\Rep\lambda_{n(k)}-\varepsilon_{n(k)}
\ge \Rep\lambda_{n(k)}-1/n(k)\ge k-1\ge 0
\hfill
\end{multline*}
and, by \eqref{disks1},
\[
\Rep\lambda<n(k)^{-2}|\Imp\lambda|^{1/\beta},
\]
we infer that, for any $\lambda \in \Delta_{n(k)}$, $k\in \N$,
\begin{equation*}
|\lambda|\ge|\Imp\lambda|\ge 
\left[n(k)^2\Rep\lambda\right]^\beta\ge \left[n(k)^2(\Rep\lambda_{n(k)}-1/n(k))\right]^\beta.
\end{equation*}

Using this estimate, for an arbitrary $s>0$ and the functional $h^*\in X^*$ defined by \eqref{functional1}, we have:
\begin{multline}\label{notin}
\int\limits_{\sigma(A)}e^{s|\lambda|^{1/\beta}}\,dv(f,h^*,\lambda)
\hfill
\text{by \eqref{decompose} as in \eqref{first1};}
\\
\shoveleft{
=\sum_{k=1}^\infty e^{-n(k)\Rep\lambda_{n(k)}}\int\limits_{\sigma(A)\cap\Delta_{n(k)}}e^{s|\lambda|^{1/\beta}}\,dv(e_{n(k)},h^*,\lambda)
}\\
\shoveleft{
\ge\sum_{k=1}^\infty e^{-n(k)\Rep\lambda_{n(k)}}e^{sn(k)^2(\Rep\lambda_{n(k)}-1/n(k))}v(e_{n(k)},h^*,\Delta_{n(k)})
}\\
\shoveleft{
\ge \sum_{k=1}^\infty e^{-n(k)\Rep\lambda_{n(k)}}e^{sn(k)^2(\Rep\lambda_{n(k)}-1/n(k))}|\langle E_A(\Delta_{n(k)})e_{n(k)},h^*\rangle|
}\\
\hfill
\text{by \eqref{ortho1} and \eqref{funct-dist1};}
\\
\hspace{1.2cm}
\ge \sum_{k=1}^\infty \varepsilon
e^{(sn(k)-1)n(k)\Rep\lambda_{n(k)}-sn(k)}n(k)^{-2}
=\infty.
\hfill
\end{multline} 

Indeed, for all $k\in\N$ sufficiently large so that 
\begin{equation*}
sn(k)\ge s+2,
\end{equation*}
in view of \eqref{infinity},
\begin{multline*}
e^{(sn(k)-1)n(k)\Rep\lambda_{n(k)}-sn(k)}n(k)^{-2}
\ge 
e^{(s+1)n(k)-sn(k)}n(k)^{-2}=e^{n(k)}n(k)^{-2}
\\
\ \
\to\infty,\ k\to\infty.
\hfill
\end{multline*}

By Proposition \ref{prop} and the properties of the \textit{o.c.} (see {\cite[Theorem XVIII.$2.11$ (f)]{Dun-SchIII}}), \eqref{notin} implies that, for any $s>0$,
\[
f\notin D(e^{s|A|^{1/\beta}}e^{A}),
\]
which, in view of \eqref{GC}, further implies that
\begin{equation*}
y(1)=e^{A}f\notin \bigcup_{s>0} D(e^{s|A|^{1/\beta}})
={\mathscr E}^{\{\beta\}}(A).
\end{equation*}

Whence, by Proposition \ref{particular}, we infer that the weak solution $y(t)=e^{tA}f$, $t\ge 0$, 
of equation \eqref{1} does not belong to the Roumieu
type Gevrey class ${\mathscr E}^{\{\beta \}}\left( (0,\infty),X\right)$. 

Now, suppose that 
\[
\Rep\lambda_{n(k)}\to -\infty,\ k\to \infty
\] 
Then, without loss of generality, we can regard that
\begin{equation}\label{infinity2}
\Rep\lambda_{n(k)} \le -k,\ k\in\N.
\end{equation}

Consider the element
\begin{equation*}
f:=\sum_{k=1}^\infty k^{-2}e_{n(k)}\in X,
\end{equation*}
which is well defined since $\{k^{-2}\}_{k=1}^\infty\in l_1$ and $\|e_{n(k)}\|=1$, $k\in\N$ (see \eqref{ortho1}).

By \eqref{ortho1},
\begin{equation}\label{subvectors2}
E_A(\cup_{k=1}^\infty\Delta_{n(k)})f=f\ \text{and}\
E_A(\Delta_{n(k)})f=k^{-2}e_{n(k)},\
k\in\N.
\end{equation}

For an arbitrary $t\ge 0$ and any $g^*\in X^*$,
\begin{multline}\label{first3}
\int\limits_{\sigma(A)}e^{t\Rep\lambda}\,dv(f,g^*,\lambda)
\hfill
\text{by \eqref{decompose} as in \eqref{first1};}
\\
\shoveleft{
=\sum_{k=1}^\infty k^{-2}\int\limits_{\sigma(A)\cap\Delta_{n(k)}}e^{t\Rep\lambda}\,dv(e_{n(k)},g^*,\lambda)
}\\
\hfill
\text{since, for $\lambda\in \Delta_{n(k)}$, by
\eqref{infinity2} and \eqref{radii1},}
\\
\hfill
\Rep\lambda=\Rep\lambda_{n(k)}+(\Rep\lambda-\Rep\lambda_{n(k)})\le \Rep\lambda_{n(k)}+|\lambda-\lambda_{n(k)}|\le -k+1\le 0;
\\
\shoveleft{
\le 
\sum_{k=1}^\infty k^{-2}\int\limits_{\sigma(A)\cap\Delta_{n(k)}}1\,dv(e_{n(k)},g^*,\lambda)
=
\sum_{k=1}^\infty k^{-2}v(e_{n(k)},g^*,\Delta_{n(k)})
\hfill
\text{by \eqref{tv};}
}\\
\hspace{1.2cm}
\le \sum_{k=1}^\infty k^{-2}4M\|e_{n(k)}\|\|g^*\|
= 4M\|g^*\|\sum_{k=1}^\infty k^{-2}
<\infty.
\hfill
\end{multline}

Similarly, for an arbitrary $t\ge 0$ and any $n\in\N$,
\begin{multline}\label{second3}
\sup_{\{g^*\in X^*\,|\,\|g^*\|=1\}}
\int\limits_{\left\{\lambda\in\sigma(A)\,\middle|\,e^{t\Rep\lambda}>n\right\}}e^{t\Rep\lambda}\,dv(f,g^*,\lambda)
\\
\shoveleft{
\le \sup_{\{g^*\in X^*\,|\,\|g^*\|=1\}}\sum_{k=1}^\infty k^{-2}
\int\limits_{\left\{\lambda\in\sigma(A)\,\middle|\,e^{t\Rep\lambda}>n\right\}\cap \Delta_{n(k)}}1\,dv(e_{n(k)},g^*,\lambda)
}\\
\hfill
\text{by \eqref{subvectors2};}
\\
\shoveleft{
\le \sup_{\{g^*\in X^*\,|\,\|g^*\|=1\}}\sum_{k=1}^\infty 
\int\limits_{\left\{\lambda\in\sigma(A)\,\middle|\,e^{t\Rep\lambda}>n\right\}\cap \Delta_{n(k)}}1\,dv(E_A(\Delta_{n(k)})f,g^*,\lambda)
}\\
\hfill
\text{by \eqref{decompose};}
\\
\shoveleft{
=\sup_{\{g^*\in X^*\,|\,\|g^*\|=1\}}\int\limits_{\{\lambda\in\sigma(A)\,|\,e^{t\Rep\lambda}>n\}}1\,dv(E_A(\cup_{k=1}^\infty\Delta_{n(k)})f,g^*,\lambda)
\hfill
\text{by \eqref{subvectors2};}
}\\
\shoveleft{
=\sup_{\{g^*\in X^*\,|\,\|g^*\|=1\}}\int\limits_{\{\lambda\in\sigma(A)\,|\,e^{t\Rep\lambda}>n\}}1\,dv(f,g^*,\lambda)
\hfill
\text{by \eqref{cond(ii)};}
}\\
\shoveleft{
\le \sup_{\{g^*\in X^*\,|\,\|g^*\|=1\}}4M
\left\|E_A\left(\left\{\lambda\in\sigma(A)\,\middle|\,e^{t\Rep\lambda}>n\right\}\right)f\right\|\|g^*\|
}\\
\shoveleft{
\le 4M\|E_A(\{\lambda\in\sigma(A)\,|\,e^{t\Rep\lambda}>n\})f\|
}\\
\hfill
\text{by the strong continuity of the {\it s.m.};}
\\
\hspace{1.2cm}
\to 4M\left\|E_A\left(\emptyset\right)f\right\|=0,\ n\to\infty.
\hfill
\end{multline}

By Proposition \ref{prop}, \eqref{first3} and \eqref{second3} jointly imply that 
\[
f\in \bigcap\limits_{t\ge 0}D(e^{tA}),
\]
and hence,
by Theorem \ref{GWS},
\[
y(t):=e^{tA}f,\ t\ge 0,
\]
is a weak solution of equation \eqref{1}.

Let
\begin{equation}\label{functional2}
h^*:=\sum_{k=1}^\infty k^{-2}e_{n(k)}^*\in X^*,
\end{equation}
the functional being well defined since $\{k^{-2}\}_{k=1}^\infty\in l_1$ and $\|e_{n(k)}^*\|=1$, $k\in\N$ (see \eqref{H-B1}).

In view of \eqref{H-B1} and \eqref{dist1}, we have:
\begin{equation}\label{ffunct-dist2}
\langle e_{n(k)},h^*\rangle=\langle e_{n(k)},k^{-2}e_{n(k)}^*\rangle=d_{n(k)}k^{-2}\ge \varepsilon k^{-2},\ k\in\N.
\end{equation}

Since, for any $\lambda \in \Delta_{n(k)}$, $k\in \N$, by \eqref{infinity2} and \eqref{radii1},
\begin{multline}\label{-inf}
\Rep\lambda =\Rep\lambda_{n(k)}+(\Rep\lambda-\Rep\lambda_{n(k)})
\le
\Rep\lambda_{n(k)}+|\Rep\lambda-\Rep\lambda_{n(k)}|
\\
\hspace{1.2cm}
\le 
\Rep\lambda_{n(k)}+\varepsilon_{n(k)}\le -k+1 \le 0
\hfill
\end{multline}
and, by \eqref{disks1},
\[
-n(k)^{-1}|\Imp\lambda|^{1/\beta}<\Rep\lambda,
\]
we infer that, for any $\lambda \in \Delta_{n(k)}$, $k\in \N$,
\begin{equation*}
|\lambda|\ge|\Imp\lambda|\ge 
\left[n(k)(-\Rep\lambda)\right]^\beta.
\end{equation*}

Using this estimate, for an arbitrary $s>0$ and the functional $h^*\in X^*$ defined by \eqref{functional2}, we have:
\begin{multline}\label{notin3}
\int\limits_{\sigma(A)}e^{s|\lambda|^{1/\beta}}e^{\Rep\lambda}\,dv(f,h^*,\lambda)
\hfill
\text{by \eqref{decompose} as in \eqref{first1};}
\\
\shoveleft{
=\sum_{k=1}^\infty k^{-2}\int\limits_{\sigma(A)\cap\Delta_{n(k)}}e^{s|\lambda|^{1/\beta}}e^{\Rep\lambda}\,dv(e_{n(k)},h^*,\lambda)
}\\
\hspace{1.2cm}
\ge\sum_{k=1}^\infty k^{-2}\int\limits_{\sigma(A)\cap\Delta_{n(k)}}e^{[sn(k)-1](-\Rep \lambda)}\,dv(e_{n(k)},h^*,\lambda)=\infty.
\hfill
\end{multline} 

Indeed, for all $k\in\N$ sufficiently large so that 
\begin{equation*}
sn(k)\ge 2,
\end{equation*}
we have:
\begin{multline*}
k^{-2}\int\limits_{\sigma(A)\cap\Delta_{n(k)}}e^{[sn(k)-1](-\Rep \lambda)}\,dv(e_{n(k)},h^*,\lambda)
\ge k^{-2}\int\limits_{\sigma(A)\cap\Delta_{n(k)}}e^{-\Rep \lambda}\,dv(e_{n(k)},h^*,\lambda)
\\
\hfill
\text{by \eqref{-inf};}
\\
\shoveleft{
\ge k^{-2}e^{k-1}\int\limits_{\sigma(A)\cap\Delta_{n(k)}}1\,dv(e_{n(k)},h^*,\lambda)
=k^{-2}e^{k-1}
v(e_{n(k)},h^*,\Delta_{n(k)})
}\\
\shoveleft{
\ge k^{-2}e^{k-1}|\langle E_A(\Delta_{n(k)})e_{n(k)},h^*\rangle|
\hfill
\text{by \eqref{ortho1} and \eqref{ffunct-dist2};}
}\\
\ \
\ge \varepsilon k^{-4}e^{k-1}\to\infty,\ k\to\infty.
\hfill
\end{multline*} 

By Proposition \ref{prop} and the properties of the \textit{o.c.} (see {\cite[Theorem XVIII.$2.11$ (f)]{Dun-SchIII}}), \eqref{notin3} implies that, for any $s>0$,
\[
f\notin D(e^{s|A|^{1/\beta}}e^{A}),
\]
which, in view of \eqref{GC}, further implies that
\begin{equation*}
y(1)=e^{A}f\notin \bigcup_{s>0} D(e^{s|A|^{1/\beta}})
={\mathscr E}^{\{\beta\}}(A).
\end{equation*}

Whence, by Proposition \ref{particular}, we infer that the weak solution $y(t)=e^{tA}f$, $t\ge 0$, 
of equation \eqref{1} does not belong to the Roumieu
type Gevrey class ${\mathscr E}^{\{\beta \}}\left( (0,\infty),X\right)$, which completes our consideration of the case of
the sequence's $\{\Rep\lambda_n\}_{n=1}^\infty$ being \textit{unbounded}. 

With every possibility concerning $\{\Rep\lambda_n\}_{n=1}^\infty$ considered, 
the proof by contrapositive of the \textit{``only if" part} is complete and so is the proof of the 
entire statement.
\end{proof}

For $\beta=1$, we obtain the following important particular case.

\begin{cor}[Characterization of the Analyticity of Weak Solutions on $(0,\infty)$]\label{CAWS}
Let $A$ be a scalar type spectral operator in a complex Banach space $(X,\|\cdot\|)$. Every weak solution of the equation \eqref{1} is analytic on $(0,\infty)$ iff there exist $b_+>0$ and $ b_->0$ such that the set $\sigma(A)\setminus {\mathscr P}^1_{b_-,b_+}$,
where
\begin{equation*}
{\mathscr P}^1_{b_-,b_+}:=\left\{\lambda \in \C\, \middle|\,
\Rep\lambda \le -b_-|\Imp\lambda| 
\ \text{or}\ 
\Rep\lambda \ge b_+|\Imp\lambda|\right\},
\end{equation*}
is bounded 
(see Fig. \ref{fig:graph6}).

\begin{figure}[h]
\centering
\includegraphics[height=2in]{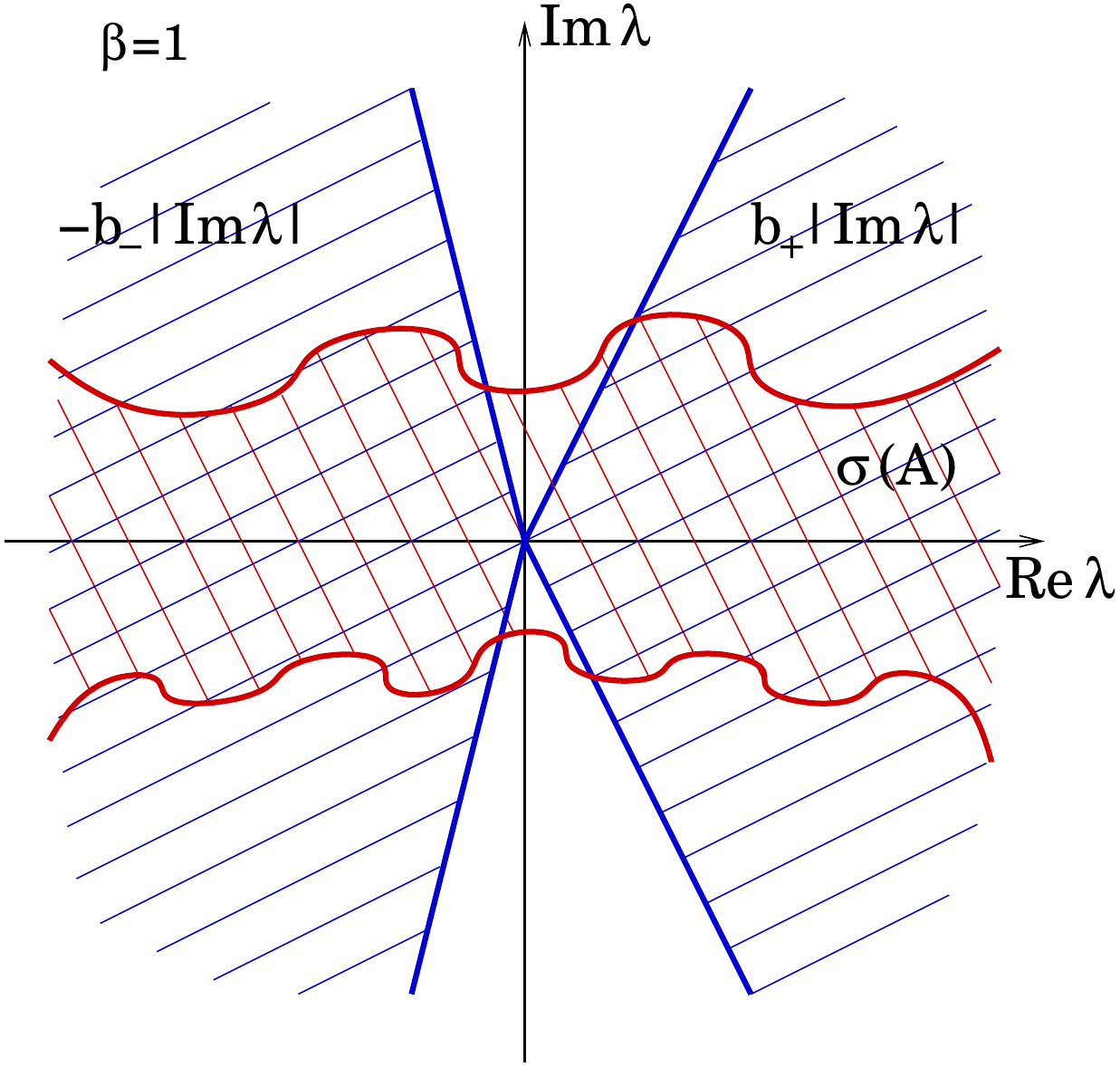}
\caption[]{}
\label{fig:graph6}
\end{figure}
\end{cor}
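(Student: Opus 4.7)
The plan is to derive this corollary as an immediate specialization of Theorem \ref{open1} to the case $\beta=1$, so essentially no new analytic work is needed beyond invoking the definitions already laid out in Subsection \ref{GCF}. The key identification is the one recorded in the bullet list following the definition of the Gevrey function classes: for $\beta=1$, the Roumieu class ${\mathscr E}^{\{1\}}(I,X)$ coincides with the class of $X$-valued functions on $I$ that are analytically continuable into complex neighborhoods of $I$, i.e., the analytic vector functions on $I$. Taking $I=(0,\infty)$, a weak solution $y(\cdot)$ of \eqref{1} is analytic on $(0,\infty)$ if and only if $y(\cdot)\in{\mathscr E}^{\{1\}}((0,\infty),X)$.

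With this translation in hand, I would simply apply Theorem \ref{open1} with $\beta=1$. On the spectral side, substituting $\beta=1$ into the definition of ${\mathscr P}^\beta_{b_-,b_+}$ produces exactly the set
\[
{\mathscr P}^1_{b_-,b_+}=\left\{\lambda\in\C\,\middle|\,\Rep\lambda\le -b_-|\Imp\lambda|\ \text{or}\ \Rep\lambda\ge b_+|\Imp\lambda|\right\},
\]
which is the union of two closed sectors in the complex plane symmetric about the real axis. Thus Theorem \ref{open1} with $\beta=1$ asserts precisely that every weak solution of \eqref{1} belongs to ${\mathscr E}^{\{1\}}((0,\infty),X)$ if and only if $\sigma(A)\setminus{\mathscr P}^1_{b_-,b_+}$ is bounded for some $b_+,b_->0$, and by the identification above this is exactly the claim of the corollary.

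Since the statement is a direct specialization, there is no genuine obstacle to address; the only thing that warrants a brief mention in the write-up is the equivalence between membership in ${\mathscr E}^{\{1\}}((0,\infty),X)$ and analyticity on $(0,\infty)$, which is a standard fact already cited (see \cite{Mandel}) in the preliminaries. The proof can therefore be reduced to a single sentence of the form ``immediate from Theorem \ref{open1} with $\beta=1$, in view of ${\mathscr E}^{\{1\}}((0,\infty),X)$ being the class of $X$-valued functions analytic on $(0,\infty)$.''
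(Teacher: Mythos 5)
Your proposal is correct and matches the paper exactly: the corollary is stated there without a separate proof, precisely as the specialization of Theorem \ref{open1} to $\beta=1$ combined with the identification of ${\mathscr E}^{\{1\}}((0,\infty),X)$ with the class of analytic vector functions on $(0,\infty)$ recorded in the preliminaries. Nothing further is needed.
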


\begin{rem}
Thus, we have obtained a generalization of {\cite[Theorem $4.2$]{Markin2001(1)}}, the counterpart for a normal operator $A$ in a complex Hilbert space, and of {\cite[Theorem $5.1$]{Markin2004(1)}} (cf. \cite{Markin2008}), a characterization of the generation of a Roumieu type Gevrey ultradifferentiable $C_0$-semigroup by a scalar type spectral operator $A$.
\end{rem}

Now, let us treat the Beurling type strong Gevrey ultradifferentiability of order $\beta>1$. Observe that the case of \textit{entireness} ($\beta=1$) is included in {\cite[Theorem $4.1$]{Markin2018(4)}}
(see also {\cite[Corollary $4.1$]{Markin2018(4)}}).

\begin{thm}\label{open2}
Let $A$ be a scalar type spectral operator in a complex Banach space $(X,\|\cdot\|)$ with spectral measure $E_A(\cdot)$ and $ 1<\beta<\infty$.
Every weak solution of equation \eqref{1} belongs to the $\beta$th-order Beurling type Gevrey class 
${\mathscr E}^{(\beta)}\left((0,\infty),X\right)$ iff there exists a $b_+>0$ such that, for any $b_->0$, the set $\sigma(A)\setminus {\mathscr P}^\beta_{b_-,b_+}$,
where
\begin{equation*}
{\mathscr P}^\beta_{b_-,b_+}:=\left\{\lambda \in \C\, \middle|\,
\Rep\lambda \le -b_-|\Imp\lambda|^{1/\beta}\ \text{or}\ 
\Rep\lambda \ge b_+|\Imp\lambda|^{1/\beta} \right\},
\end{equation*}
is bounded (see Fig. \ref{fig:graph4}).
\end{thm}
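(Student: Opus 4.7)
The plan is to adapt Theorem \ref{open1}'s template to the Beurling quantifier structure, with the novelty concentrated in the left-cone estimate where the standing restriction $\beta>1$ enters essentially. For the \emph{``if''} direction, fix a weak solution $y(\cdot)=e^{\cdot A}f$ (Theorem \ref{GWS}) with $f\in\bigcap_{t'\ge 0}D(e^{t'A})$; by Proposition \ref{particular} and \eqref{GC} it suffices to show that, for every $t>0$ and every $s>0$, $f\in D(e^{s|A|^{1/\beta}}e^{tA})$, which I verify via Proposition \ref{prop}. I decompose $\sigma(A)$ into the same four pieces as in Theorem \ref{open1}: the bounded sets $\sigma(A)\setminus\mathscr P^\beta_{b_-,b_+}$ and $\mathscr P^\beta_{b_-,b_+}\cap\{-1<\Rep\lambda<1\}$ are trivially integrable by finiteness of $v(f,g^*,\cdot)$; the right cone $\mathscr P^\beta_{b_-,b_+}\cap\{\Rep\lambda\ge 1\}$ is handled verbatim, using $|\lambda|^{1/\beta}\le(1+b_+^{-\beta})^{1/\beta}\Rep\lambda$ together with $f\in D(e^{t'A})$ for $t':=s(1+b_+^{-\beta})^{1/\beta}+t$, which works for \emph{any} $s>0$ and the fixed $b_+$ supplied by the hypothesis.

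The real work lies on the left cone $\mathscr P^\beta_{b_-,b_+}\cap\{\Rep\lambda\le -1\}$, which I split into the sub-regions $|\Imp\lambda|\le -\Rep\lambda$ and $|\Imp\lambda|>-\Rep\lambda$. In the first sub-region, $|\lambda|^{1/\beta}\le 2^{1/(2\beta)}(-\Rep\lambda)^{1/\beta}$; since $\beta>1$, the function $u\mapsto 2^{1/(2\beta)}su^{1/\beta}-tu$ is bounded above on $[1,\infty)$ for each fixed $s,t>0$, so $e^{s|\lambda|^{1/\beta}+t\Rep\lambda}$ is uniformly bounded there and the integral is finite. In the second sub-region, $|\Imp\lambda|>-\Rep\lambda$ combined with $|\Imp\lambda|\le b_-^{-\beta}(-\Rep\lambda)^\beta$ yields $|\lambda|^{1/\beta}\le 2^{1/(2\beta)}b_-^{-1}(-\Rep\lambda)$; since the Beurling hypothesis permits taking $b_-$ arbitrarily large, I choose $b_-\ge 2^{1/(2\beta)}s/t$, whence $s|\lambda|^{1/\beta}+t\Rep\lambda\le 0$ and the integrand is $\le 1$. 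Condition (ii) of Proposition \ref{prop} is verified by the same strong-continuity-of-$E_A(\cdot)$ argument as in Theorem \ref{open1}, using \eqref{cond(ii)}.

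For the \emph{``only if''} direction I argue by contrapositive. Assuming that for every $b_+>0$ there exists $b_->0$ with $\sigma(A)\setminus\mathscr P^\beta_{b_-,b_+}$ unbounded, I apply this with $b_+=1/n$ to extract $b_-^{(n)}>0$ and points $\lambda_n\in\sigma(A)$ with $|\lambda_n|>\max(n,|\lambda_{n-1}|)$ satisfying $-b_-^{(n)}|\Imp\lambda_n|^{1/\beta}<\Rep\lambda_n<(1/n)|\Imp\lambda_n|^{1/\beta}$. Then I reproduce the construction of pairwise disjoint open disks $\Delta_n\ni\lambda_n$, unit vectors $e_n\in E_A(\Delta_n)X$, and biorthogonal functionals $e_n^*\in X^*$ via Hahn-Banach from Theorem \ref{open1}, and split into the cases $\{\Rep\lambda_n\}$ bounded, $\Rep\lambda_{n(k)}\to\infty$, or $\Rep\lambda_{n(k)}\to-\infty$. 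In each case I define $f=\sum c_ke_{n(k)}\in X$ with suitably summable coefficients $c_k$ so that $f\in\bigcap_{t'\ge 0}D(e^{t'A})$ (which makes $y(t)=e^{tA}f$ a weak solution via Theorem \ref{GWS}) while, for $h^*=\sum k^{-2}e_{n(k)}^*\in X^*$ and a specific choice of $t>0$ and $s>0$, the Gevrey integral $\int_{\sigma(A)}e^{s|\lambda|^{1/\beta}}e^{t\Rep\lambda}\,dv(f,h^*,\lambda)=\infty$, placing $y(t)\notin D(e^{s|A|^{1/\beta}})$ and hence, a fortiori, $y(t)\notin\mathscr E^{(\beta)}(A)=\bigcap_{s'>0}D(e^{s'|A|^{1/\beta}})$; Proposition \ref{particular} then rules out $y(\cdot)\in\mathscr E^{(\beta)}((0,\infty),X)$. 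Note that failure for a single $s$ is enough here, in contrast to the Roumieu setting where every $s$ had to fail.

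The principal obstacle is the left-cone sub-region with $|\Imp\lambda|\le -\Rep\lambda$: the decay $u^{1/\beta}=o(u)$ as $u\to\infty$ used there is precisely what collapses at $\beta=1$, which is why that case is excluded and must be treated separately via the entire-function machinery of \cite{Markin2018(4)}. A secondary technical nuisance in the ``only if'' part is that the $b_-^{(n)}$ furnished by the negation are not under our control and may grow arbitrarily with $n$; when $\{\Rep\lambda_n\}$ is unbounded, the damping weights $c_k$ must therefore be calibrated against $b_-^{(n)}$ to guarantee both the admissibility of $f$ in $\bigcap_{t'\ge 0}D(e^{t'A})$ and the divergence of the test integral.
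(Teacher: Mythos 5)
Your \emph{``if''} part is sound and is essentially the paper's argument in light disguise: where the paper invokes $x\le b_-^{-\beta}x^{\beta}$ for $x\ge c(b_-)$ and cuts the left cone at $\Rep\lambda\le -c(b_-)$, you instead split the left cone along $|\Imp\lambda|\le -\Rep\lambda$ versus $|\Imp\lambda|>-\Rep\lambda$; both devices exploit $\beta>1$ in the same way, and your choice $b_-\ge 2^{1/(2\beta)}s/t$ plays the role of the paper's $b_-:=2^{1/\beta}st^{-1}$. No objection there.

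The \emph{``only if''} part, however, has a genuine gap, and it is precisely the point you flag as a ``secondary technical nuisance.'' The paper begins this part with a quantifier reduction: the negated condition ``for every $b_+>0$ there is a $b_->0$ with $\sigma(A)\setminus{\mathscr P}^\beta_{b_-,b_+}$ unbounded'' is shown to be \emph{equivalent} to ``there is a single $b_->0$ such that for every $b_+>0$ the set $\sigma(A)\setminus{\mathscr P}^\beta_{b_-,b_+}$ is unbounded'' (by separately considering whether $\{\lambda\in\sigma(A)\mid -b_-|\Imp\lambda|^{1/\beta}<\Rep\lambda\le 0\}$ is unbounded for some $b_-$ or bounded for all $b_-$). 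Only with a \emph{fixed} $b_-$ do the chosen points satisfy the uniform lower bound $|\Imp\lambda|>\left[b_-^{-1}(-\Rep\lambda)\right]^{\beta}$ on every disk, which in the case $\Rep\lambda_{n(k)}\to-\infty$ yields $e^{s|\lambda|^{1/\beta}}e^{\Rep\lambda}\ge e^{(sb_-^{-1}-1)(-\Rep\lambda)}\ge e^{-\Rep\lambda}$ for the single choice $s=2b_-$ and forces divergence of the test integral. You skip this reduction and select $\lambda_n$ with $b_-=b_-^{(n)}$ uncontrolled; calibrating the coefficients $c_k$ cannot repair this, because the defect is in the points, not the weights. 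Concretely, take $\sigma(A)=\{-m+i\mid m\in\N\}\cup\{-k+ik^{\beta}\mid k\in\N\}$: the theorem's condition fails, yet if an adversary hands you $b_-^{(n)}=2^{2^{n}}$ you may legitimately select $\lambda_n=-m_n+i$ with $n<m_n<2^{2^{n}}$, so that $|\lambda_{n(k)}|^{1/\beta}\sim(-\Rep\lambda_{n(k)})^{1/\beta}=o(-\Rep\lambda_{n(k)})$. Then for \emph{every} summable coefficient sequence and every $t>0$, $s>0$, the integrand $e^{s|\lambda|^{1/\beta}}e^{t\Rep\lambda}$ is uniformly bounded on $\bigcup_k\Delta_{n(k)}$, so every $f$ built from these disks lies in $\bigcap_{s>0}D\bigl(e^{s|A|^{1/\beta}}e^{tA}\bigr)$ and no counterexample emerges. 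You must first establish the equivalent ``$\exists\, b_-\ \forall\, b_+$'' form of the hypothesis (or otherwise secure a uniform lower bound on $|\Imp\lambda_n|$ in terms of $-\Rep\lambda_n$) before running the disk construction.
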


\begin{proof}\

\textit{``If"} Part. Suppose that there exists a $b_+>0$ such that, for any $b_->0$, the set $\sigma(A)\setminus {\mathscr P}^\beta_{b_-,b_+}$
is \textit{bounded} and let $y(\cdot)$ be an arbitrary weak solution of equation \eqref{1}. 

By Theorem \ref{GWS}, 
\begin{equation*}
y(t)=e^{tA}f,\ t\ge 0,\ \text{with some}\
f \in \bigcap_{t\ge 0}D(e^{tA}).
\end{equation*}

Our purpose is to show that $y(\cdot)\in {\mathscr E}^{(\beta)}\left((0,\infty),X\right)$, which, by Proposition \ref{particular} and \eqref{GC}, is attained by showing that, for each $t>0$,
\[
y(t)\in {\mathscr E}^{(\beta)}\left(A\right)
=\bigcap_{s>0} D(e^{s|A|^{1/\beta}}).
\]

Let us proceed by proving that, for any $t>0$ and $s>0$,
\[
y(t)\in D(e^{s|A|^{1/\beta}})
\] 
via Proposition \ref{prop}.

Since $\beta>1$, for any $b_->0$,
there exists a $c(b_-)>0$ such that
\begin{equation}\label{est}
x\le b_-^{-\beta}x^{\beta},\ x\ge c(b_-).
\end{equation}

Fixing arbitrary $t>0$ and $s>0$, since $b_->0$ is random, we can set 
\begin{equation}\label{b_-}
b_-:=2^{1/\beta}st^{-1}>0,
\end{equation}
such a peculiar choice explaining itself in the process. 

For arbitrary $t>0$ and $s>0$, $b_->0$ chosen as in \eqref{b_-}, and any $g^*\in X^*$,
\begin{multline}\label{ffirst}
\int\limits_{\sigma(A)}e^{s|\lambda|^{1/\beta}}e^{t\Rep\lambda}\,dv(f,g^*,\lambda)
=\int\limits_{\sigma(A)\setminus{\mathscr P}_{b_-,b_+}^\beta}e^{s|\lambda|^{1/\beta}}e^{t\Rep\lambda}\,dv(f,g^*,\lambda)
\\
\shoveleft{
+\int\limits_{\left\{\lambda\in \sigma(A)\cap{\mathscr P}_{b_-,b_+}^\beta\,\middle|\,-c(b_-)<\Rep\lambda<1 \right\}}e^{s|\lambda|^{1/\beta}}e^{t\Rep\lambda}\,dv(f,g^*,\lambda)
}\\
\shoveleft{
+\int\limits_{\left\{\lambda\in \sigma(A)\cap{\mathscr P}_{b_-,b_+}^\beta\,\middle|\,\Rep\lambda\ge 1 \right\}}e^{s|\lambda|^{1/\beta}}e^{t\Rep\lambda}\,dv(f,g^*,\lambda)
}\\
\hspace{1.2cm}
+\int\limits_{\left\{\lambda\in \sigma(A)\cap{\mathscr P}_{b_-,b_+}^\beta\,\middle|\,\Rep\lambda\le -c(b_-) \right\}}e^{s|\lambda|^{1/\beta}}e^{t\Rep\lambda}\,dv(f,g^*,\lambda)<\infty.
\hfill
\end{multline}

Indeed, 
\[
\int\limits_{\sigma(A)\setminus{\mathscr P}_{b_-,b_+}^\beta}e^{s|\lambda|^{1/\beta}}e^{t\Rep\lambda}\,dv(f,g^*,\lambda)<\infty
\]
and
\[
\int\limits_{\left\{\lambda\in \sigma(A)\cap{\mathscr P}_{b_-,b_+}^\beta\,\middle|\,-c(b_-)<\Rep\lambda<1 \right\}}e^{s|\lambda|^{1/\beta}}e^{t\Rep\lambda}\,dv(f,g^*,\lambda)<\infty
\]
due to the boundedness of the sets
\[
\sigma(A)\setminus{\mathscr P}_{b_-,b_+}^\beta\ \text{and}\
\left\{\lambda\in \sigma(A)\cap{\mathscr P}_{b_-,b_+}^\beta\;\middle|\;-c(b_-)<\Rep\lambda<1 \right\},
\]
the continuity of the integrated function 
on $\C$, and the finiteness of the measure $v(f,g^*,\cdot)$.

Further, for arbitrary $t>0$, $s>0$, $b_->0$ chosen as in \eqref{b_-}, and any $g^*\in X^*$,
\begin{multline}\label{iinterm}
\int\limits_{\left\{\lambda\in \sigma(A)\cap{\mathscr P}_{b_-,b_+}^\beta\,\middle|\,\Rep\lambda\ge 1 \right\}}e^{s|\lambda|^{1/\beta}}e^{t\Rep\lambda}\,dv(f,g^*,\lambda)
\\
\shoveleft{
\le\int\limits_{\left\{\lambda\in \sigma(A)\cap{\mathscr P}_{b_-,b_+}^\beta\,\middle|\,\Rep\lambda\ge 1 \right\}}e^{s\left[|\Rep\lambda|+|\Imp\lambda|\right]^{1/\beta}}e^{t\Rep\lambda}\,dv(f,g^*,\lambda)
}\\
\hfill
\text{since, for $\lambda\in\sigma(A)\cap{\mathscr P}_{b_-,b_+}^\beta$ with $\Rep\lambda\ge 1$, $b_+^{-\beta}\Rep\lambda^\beta\ge |\Imp\lambda|$;}
\\
\shoveleft{
\le 
\int\limits_{\left\{\lambda\in \sigma(A)\cap{\mathscr P}_{b_-,b_+}^\beta\,\middle|\,\Rep\lambda\ge 1 \right\}}e^{s\left[\Rep\lambda+b_+^{-\beta}\Rep\lambda^\beta\right]^{1/\beta}}e^{t\Rep\lambda}\,dv(f,g^*,\lambda)
}\\
\hfill
\text{since, in view of $\Rep\lambda\ge 1$ and $\beta>1$, $\Rep\lambda^\beta\ge\Rep\lambda$;}
\\
\shoveleft{
\le 
\int\limits_{\left\{\lambda\in \sigma(A)\cap{\mathscr P}_{b_-,b_+}^\beta\,\middle|\,\Rep\lambda\ge 1 \right\}}e^{s\left(1+b_+^{-\beta}\right)^{1/\beta}\Rep\lambda}e^{t\Rep\lambda}\,dv(f,g^*,\lambda)
}\\
\shoveleft{
= \int\limits_{\left\{\lambda\in \sigma(A)\cap{\mathscr P}_{b_-,b_+}^\beta\,\middle|\,\Rep\lambda\ge 1 \right\}}e^{\left[s\left(1+b_+^{-\beta}\right)^{1/\beta}+t\right]\Rep\lambda}\,dv(f,g^*,\lambda)
}\\
\hfill
\text{since $f\in \bigcap\limits_{t\ge 0}D(e^{tA})$, by Proposition \ref{prop};}
\\
\hspace{1.2cm}
<\infty. 
\hfill
\end{multline}

Observe that, for the finiteness of the three preceding integrals, the choice of $b_->0$ is superfluous.

Finally, for arbitrary $t>0$ and $s>0$, $b_->0$ chosen as in \eqref{b_-}, and any $g^*\in X^*$,
\begin{multline}\label{iinterm2}
\int\limits_{\left\{\lambda\in \sigma(A)\cap{\mathscr P}_{b_-,b_+}^\beta\,\middle|\,\Rep\lambda\le -c(b_-) \right\}}e^{s|\lambda|^{1/\beta}}e^{t\Rep\lambda}\,dv(f,g^*,\lambda)
\\
\shoveleft{
\le\int\limits_{\left\{\lambda\in \sigma(A)\cap{\mathscr P}_{b_-,b_+}^\beta\,\middle|\,\Rep\lambda\le -c(b_-) \right\}}e^{s\left[|\Rep\lambda|+|\Imp\lambda|\right]^{1/\beta}}e^{t\Rep\lambda}\,dv(f,g^*,\lambda)
}\\
\hfill
\text{since, for $\lambda\in\sigma(A)\cap{\mathscr P}_{b_-,b_+}^\beta$ with $\Rep\lambda\le -c(b_-)$, $b_-^{-\beta}(-\Rep\lambda)^\beta\ge |\Imp\lambda|$;}
\\
\shoveleft{
\le 
\int\limits_{\left\{\lambda\in \sigma(A)\cap{\mathscr P}_{b_-,b_+}^\beta\,\middle|\,\Rep\lambda\le -c(b_-) \right\}}e^{s\left[-\Rep\lambda+b_-^{-\beta}(-\Rep\lambda)^\beta\right]^{1/\beta}}e^{t\Rep\lambda}\,dv(f,g^*,\lambda)
}\\
\hfill
\text{since, in view of $\Rep\lambda\le -c(b_-)$, by \eqref{est}, $b_-^{-\beta}(-\Rep\lambda)^\beta\ge-\Rep\lambda$;}
\\
\shoveleft{
\le 
\int\limits_{\left\{\lambda\in \sigma(A)\cap{\mathscr P}_{b_-,b_+}^\beta\,\middle|\,\Rep\lambda\le -c(b_-) \right\}}e^{s\left(2b_-^{-\beta}\right)^{1/\beta}(-\Rep\lambda)}e^{t\Rep\lambda}\,dv(f,g^*,\lambda)
}\\
\shoveleft{
= \int\limits_{\left\{\lambda\in \sigma(A)\cap{\mathscr P}_{b_-,b_+}^\beta\,\middle|\,\Rep\lambda\le -c(b_-) \right\}}e^{\left[t-s2^{1/\beta}b_-^{-1}\right]\Rep\lambda}\,dv(f,g^*,\lambda)
}\\
\hfill
\text{since $b_-:=2^{1/\beta}st^{-1}>0$ (see \eqref{b_-});}
\\
\shoveleft{
= \int\limits_{\left\{\lambda\in \sigma(A)\cap{\mathscr P}_{b_-,b_+}^\beta\,\middle|\,\Rep\lambda\le -c(b_-) \right\}}1\,dv(f,g^*,\lambda)
\le \int\limits_{\sigma(A)}1\,dv(f,g^*,\lambda)
}\\
\shoveleft{
=v(f,g^*,\sigma(A))
\hfill
\text{by the \eqref{tv};}
}\\
\hspace{1.2cm}
\le 4M\|f\|\|g^*\|<\infty. 
\hfill
\end{multline}

Also, for arbitrary $t>0$ and $s>0$, $b_->0$ chosen as in \eqref{b_-}, and any $n\in\N$,
\begin{multline}\label{ssecond}
\sup_{\{g^*\in X^*\,|\,\|g^*\|=1\}}
\int\limits_{\left\{\lambda\in\sigma(A)\,\middle|\,e^{s|\lambda|^{1/\beta}}e^{t\Rep\lambda}>n\right\}}
e^{s|\lambda|^{1/\beta}}e^{t\Rep\lambda}\,dv(f,g^*,\lambda)
\\
\shoveleft{
\le \sup_{\{g^*\in X^*\,|\,\|g^*\|=1\}}
\int\limits_{\left\{\lambda\in\sigma(A)\setminus{\mathscr P}_{b_-,b_+}^\beta\,\middle|\,e^{s|\lambda|^{1/\beta}}e^{t\Rep\lambda}>n\right\}}e^{s|\lambda|^{1/\beta}}e^{t\Rep\lambda}\,dv(f,g^*,\lambda)
}\\
\shoveleft{
+ \sup_{\{g^*\in X^*\,|\,\|g^*\|=1\}}
\int\limits_{\left\{\lambda\in\sigma(A)\cap{\mathscr P}_{b_-,b_+}^\beta\,\middle|\,-c(b_-)<\Rep\lambda<1,\, e^{s|\lambda|^{1/\beta}}e^{t\Rep\lambda}>n\right\}}e^{s|\lambda|^{1/\beta}}e^{t\Rep\lambda}\,dv(f,g^*,\lambda)
}\\
\shoveleft{
+ \sup_{\{g^*\in X^*\,|\,\|g^*\|=1\}}
\int\limits_{\left\{\lambda\in\sigma(A)\cap{\mathscr P}_{b_-,b_+}^\beta\,\middle|\,\Rep\lambda\ge 1,\, e^{s|\lambda|^{1/\beta}}e^{t\Rep\lambda}>n\right\}}e^{s|\lambda|^{1/\beta}}e^{t\Rep\lambda}\,dv(f,g^*,\lambda)
}\\
\shoveleft{
+ \sup_{\{g^*\in X^*\,|\,\|g^*\|=1\}}
\int\limits_{\left\{\lambda\in\sigma(A)\cap{\mathscr P}_{b_-,b_+}^\beta\,\middle|\,\Rep\lambda\le -c(b_-),\, e^{s|\lambda|^{1/\beta}}e^{t\Rep\lambda}>n\right\}}e^{s|\lambda|^{1/\beta}}e^{t\Rep\lambda}\,dv(f,g^*,\lambda)
}\\
\hspace{1.2cm}
\to 0,\ n\to\infty.
\hfill
\end{multline}

Indeed, since, due to the boundedness of the sets
\[
\sigma(A)\setminus{\mathscr P}_{b_-,b_+}^\beta\ \text{and}\
\left\{\lambda\in\sigma(A)\cap{\mathscr P}_{b_-,b_+}^\beta\,\middle|\,-c(b_-)<\Rep\lambda<1\right\}
\]
and the continuity of the integrated function on $\C$,
the sets
\[
\left\{\lambda\in\sigma(A)\setminus{\mathscr P}_{b_-,b_+}^\beta\,\middle|\,e^{s|\lambda|^{1/\beta}}e^{t\Rep\lambda}>n\right\}
\]
and 
\[
\left\{\lambda\in\sigma(A)\cap{\mathscr P}_{b_-,b_+}^\beta\,\middle|\,-c(b_-)<\Rep\lambda<1,\, e^{s|\lambda|^{1/\beta}}e^{t\Rep\lambda}>n\right\}
\]
are \textit{empty} for all sufficiently large $n\in \N$,
we immediately infer that, for any $t>0$, $s>0$ and $b_->0$ chosen as in \eqref{b_-},
\[
\lim_{n\to\infty}\sup_{\{g^*\in X^*\,|\,\|g^*\|=1\}}
\int\limits_{\left\{\lambda\in\sigma(A)\setminus{\mathscr P}_{b_-,b_+}^\beta\,\middle|\,e^{s|\lambda|^{1/\beta}}e^{t\Rep\lambda}>n\right\}}e^{s|\lambda|^{1/\beta}}e^{t\Rep\lambda}\,dv(f,g^*,\lambda)=0
\]
and
\[
\lim_{n\to\infty}\sup_{\{g^*\in X^*\,|\,\|g^*\|=1\}}
\int\limits_{\left\{\lambda\in\sigma(A)\cap{\mathscr P}_{b_-,b_+}^\beta\,\middle|\,-c(b_-)<\Rep\lambda<1,\, e^{s|\lambda|^{1/\beta}}e^{t\Rep\lambda}>n\right\}}e^{s|\lambda|^{1/\beta}}e^{t\Rep\lambda}\,dv(f,g^*,\lambda)
=0.
\]

Further, for arbitrary $t>0$, $s>0$, $b_->0$ chosen as in \eqref{b_-}, and any $n\in\N$,
\begin{multline*}
\sup_{\{g^*\in X^*\,|\,\|g^*\|=1\}}
\int\limits_{\left\{\lambda\in\sigma(A)\cap{\mathscr P}_{b_-,b_+}^\beta\,\middle|\,\Rep\lambda\ge 1,\, e^{s|\lambda|^{1/\beta}}e^{t\Rep\lambda}>n\right\}}e^{s|\lambda|^{1/\beta}}e^{t\Rep\lambda}\,dv(f,g^*,\lambda)
\\
\hfill
\text{as in \eqref{iinterm};}
\\
\shoveleft{
\le \sup_{\{g^*\in X^*\,|\,\|g^*\|=1\}}
\int\limits_{\left\{\lambda\in\sigma(A)\cap{\mathscr P}_{b_-,b_+}^\beta\,\middle|\,\Rep\lambda\ge 1,\, e^{s|\lambda|^{1/\beta}}e^{t\Rep\lambda}>n\right\}}e^{\left[s\left(1+b_+^{-\beta}\right)^{1/\beta}+t\right]\Rep\lambda}\,dv(f,g^*,\lambda)
}\\
\hfill
\text{since $f\in \bigcap\limits_{t\ge 0}D(e^{tA})$, by \eqref{cond(ii)};}
\\
\shoveleft{
\le \sup_{\{g^*\in X^*\,|\,\|g^*\|=1\}}
}\\
\shoveleft{
4M\left\|E_A\left(\left\{\lambda\in\sigma(A)\cap{\mathscr P}_{b_-,b_+}^\beta\,\middle|\,\Rep\lambda\ge 1,\, e^{s|\lambda|^{1/\beta}}e^{t\Rep\lambda}>n\right\}\right)
e^{\left[s\left(1+b_+^{-\beta}\right)^{1/\beta}+t\right]A}f\right\|\|g^*\|
}\\
\shoveleft{
\le 4M\left\|E_A\left(\left\{\lambda\in\sigma(A)\cap{\mathscr P}_{b_-,b_+}^\beta\,\middle|\,\Rep\lambda\ge 1,\, e^{s|\lambda|^{1/\beta}}e^{t\Rep\lambda}>n\right\}\right)
e^{\left[s\left(1+b_+^{-\beta}\right)^{1/\beta}+t\right]A}f\right\|
}\\
\hfill
\text{by the strong continuity of the {\it s.m.};}
\\
\ \
\to 4M\left\|E_A\left(\emptyset\right)e^{\left[s\left(1+b_+^{-\beta}\right)^{1/\beta}+t\right]A}f\right\|=0,\ n\to\infty.
\hfill
\end{multline*}

Finally, for arbitrary $t>0$ and $s>0$, $b_->0$ chosen as in \eqref{b_-}, and any $n\in\N$,
\begin{multline*}
\sup_{\{g^*\in X^*\,|\,\|g^*\|=1\}}
\int\limits_{\left\{\lambda\in\sigma(A)\cap{\mathscr P}_{b_-,b_+}^\beta\,\middle|\,\Rep\lambda\le -c(b_-),\, e^{s|\lambda|^{1/\beta}}e^{t\Rep\lambda}>n\right\}}e^{s|\lambda|^{1/\beta}}e^{t\Rep\lambda}\,dv(f,g^*,\lambda)
\\
\hfill
\text{as in \eqref{iinterm2};}
\\
\shoveleft{
\le \sup_{\{g^*\in X^*\,|\,\|g^*\|=1\}}
\int\limits_{\left\{\lambda\in\sigma(A)\cap{\mathscr P}_{b_-,b_+}^\beta\,\middle|\,\Rep\lambda\le -c(b_-),\, e^{s|\lambda|^{1/\beta}}e^{t\Rep\lambda}>n\right\}}
e^{\left[t-s2^{1/\beta}b_-^{-1}\right]\Rep\lambda}\,dv(f,g^*,\lambda)
}\\
\hfill
\text{by the choice of $b_->0$ (see \eqref{b_-});}
\\
\shoveleft{
=\sup_{\{g^*\in X^*\,|\,\|g^*\|=1\}}
\int\limits_{\left\{\lambda\in\sigma(A)\cap{\mathscr P}_{b_-,b_+}^\beta\,\middle|\,\Rep\lambda\le -c(b_-),\, e^{s|\lambda|^{1/\beta}}e^{t\Rep\lambda}>n\right\}}1\,dv(f,g^*,\lambda)
}\\
\hfill
\text{by \eqref{cond(ii)};}
\\
\shoveleft{
\le \sup_{\{g^*\in X^*\,|\,\|g^*\|=1\}}
4M\left\|E_A\left(\left\{\lambda\in\sigma(A)\cap{\mathscr P}_{b_-,b_+}^\beta\,\middle|\,\Rep\lambda\le -c(b_-),\, e^{s|\lambda|^{1/\beta}}e^{t\Rep\lambda}>n\right\}\right)
f\right\|\|g^*\|
}\\
\shoveleft{
\le 4M\left\|E_A\left(\left\{\lambda\in\sigma(A)\cap{\mathscr P}_{b_-,b_+}^\beta\,\middle|\,\Rep\lambda\le -c(b_-),\, e^{s|\lambda|^{1/\beta}}e^{t\Rep\lambda}>n\right\}\right)
f\right\|
}\\
\hfill
\text{by the strong continuity of the {\it s.m.};}
\\
\ \
\to 4M\left\|E_A\left(\emptyset\right)f\right\|=0,\ n\to\infty.
\hfill
\end{multline*}

By Proposition \ref{prop} and the properties of the \textit{o.c.} (see {\cite[Theorem XVIII.$2.11$ (f)]{Dun-SchIII}}), \eqref{ffirst} and \eqref{ssecond} jointly imply that, for any $t>0$ and $s>0$,
\[
f\in D(e^{s|A|^{1/\beta}}e^{tA}),
\]
which, in view of \eqref{GC}, further implies that, for each $t>0$, 
\begin{equation*}
y(t)=e^{tA}f\in \bigcap_{s>0} D(e^{s|A|^{1/\beta}})
={\mathscr E}^{(\beta)}(A).
\end{equation*}

Whence, by Proposition \ref{particular}, we infer that
\begin{equation*}
y(\cdot) \in {\mathscr E}^{(\beta)}((0,\infty),X),
\end{equation*}
which completes the proof of the \textit{``if"} part.

\medskip
\textit{``Only if"} part. Let us prove this part {\it by contrapositive} assuming that, for any $b_+>0$, there exists a $b_->0$ such that the set 
$\sigma(A)\setminus {\mathscr P}_{b_-,b_+}^\beta$ is \textit{unbounded}. 

Let us show that, under the circumstances, we can equivalently set the following seemingly stronger hypothesis: there exists a $b_->0$ such that, for any $b_+>0$, the set 
$\sigma(A)\setminus{\mathscr P}^\beta_{b_-,b_+}$ is unbounded.

Indeed, under the premise, there are two possibilities:
\begin{enumerate}
\item  For some $b_->0$, the set
\[
\left\{\lambda\in\sigma(A)\,\middle|\,
-b_-|\Imp\lambda|^{1/\beta}<\Rep\lambda\le 0\right\}
\] 
is {\it unbounded}.
\item  For any $b_->0$, the set 
\[
\left\{\lambda\in\sigma(A)\,\middle|\,
-b_-|\Imp\lambda|^{1/\beta}<\Rep\lambda\le 0\right\}
\]
is {\it bounded}.
\end{enumerate}

In the first case, as is easily seen, the set 
$\sigma(A)\setminus{\mathscr P}^\beta_{b_-,b_+}$ 
is also unbounded for some $b_->0$ and any $b_+>0$.

In the second case, by the premise, we infer that, for any $b_+>0$, unbounded is the set
\[
\left\{\lambda\in\sigma(A)\,\middle|
0<\Rep\lambda<b_+|\Imp\lambda|^{1/\beta}\,\right\},
\]
which makes the set $\sigma(A)\setminus{\mathscr P}_{b_-,b_+}^\beta$ unbounded for any $b_->0$ and $b_+>0$.
 
The foregoing equivalent version of the premise implies, in particular, that, for some $b_->0$ and any $n\in \N$, unbounded is the set
\begin{equation*}
\sigma(A)\setminus {\mathscr P}^\beta_{b_-,n^{-2}}=
\left\{\lambda \in \sigma(A)\,\middle| 
-b_-|\Imp\lambda|^{1/\beta}<\Rep\lambda < n^{-2}|\Imp\lambda|^{1/\beta}\right\}.
\end{equation*} 

Hence, we can choose a sequence  $\left\{\lambda_n\right\}_{n=1}^\infty$ 
of points in the complex plane as follows:
\begin{equation*}
\begin{split}
&\lambda_n \in \sigma(A),\ n\in \N,\\
&-b_-|\Imp\lambda_n|^{1/\beta}<\Rep\lambda_n <n^{-2}|\Imp\lambda_n|^{1/\beta},\ n\in \N,\\
&\lambda_0:=0,\ |\lambda_n|>\max\left[n,|\lambda_{n-1}|\right],\ n\in \N.\\
\end{split}
\end{equation*}

The latter implies, in particular, that the points $\lambda_n$, $n\in\N$, are \textit{distinct} ($\lambda_i \neq \lambda_j$, $i\neq j$).

Since, for each $n\in \N$, the set
\begin{equation*}
\left\{ \lambda \in {\mathbb C}\,\middle|\, 
-b_-|\Imp\lambda|^{1/\beta}<\Rep\lambda <n^{-2}|\Imp\lambda|^{1/\beta},\
|\lambda|>\max\bigl[n,|\lambda_{n-1}|\bigr]\right\}
\end{equation*}
is {\it open} in $\C$, along with the point $\lambda_n$, it contains an {\it open disk}
\begin{equation*}
\Delta_n:=\left\{\lambda \in \C\, \middle|\,|\lambda-\lambda_n|<\varepsilon_n \right\}
\end{equation*} 
centered at $\lambda_n$ of some radius $\varepsilon_n>0$, i.e., for each $\lambda \in \Delta_n$,
\begin{equation}\label{ddisks1}
-b_-|\Imp\lambda|^{1/\beta}<\Rep\lambda < n^{-2}|\Imp\lambda|^{1/\beta}\ \text{and}\ |\lambda|>\max\bigl[n,|\lambda_{n-1}|\bigr].
\end{equation}

Furthermore, under the circumstances, we can regard the radii of the disks to be small enough so that
\begin{equation}\label{rradii1}
\begin{split}
&0<\varepsilon_n<\dfrac{1}{n},\ n\in\N,\ \text{and}\\
&\Delta_i \cap \Delta_j=\emptyset,\ i\neq j
\quad \text{(i.e., the disks are {\it pairwise disjoint})}.
\end{split}
\end{equation}

Whence, by the properties of the {\it s.m.}, 
\begin{equation*}
E_A(\Delta_i)E_A(\Delta_j)=0,\ i\neq j,
\end{equation*}
where $0$ stands for the \textit{zero operator} on $X$.

Observe also, that the subspaces $E_A(\Delta_n)X$, $n\in \N$, are \textit{nontrivial} since
\[
\Delta_n \cap \sigma(A)\neq \emptyset,\ n\in\N,
\]
with $\Delta_n$ being an \textit{open set} in $\C$. 

By choosing a unit vector $e_n\in E_A(\Delta_n)X$ for each $n\in\N$, we obtain a sequence 
$\left\{e_n\right\}_{n=1}^\infty$ such that
\begin{equation}\label{oortho1}
\|e_n\|=1,\ n\in\N,\ \text{and}\ E_A(\Delta_i)e_j=\delta_{ij}e_j,\ i,j\in\N,
\end{equation}
where $\delta_{ij}$ is the \textit{Kronecker delta}.

As is easily seen, \eqref{oortho1} implies that the vectors $e_n$, $n\in \N$, are \textit{linearly independent}.

Furthermore, there exists an $\varepsilon>0$ such that
\begin{equation}\label{ddist1}
d_n:=\dist\left(e_n,\spa\left(\left\{e_i\,|\,i\in\N,\ i\neq n\right\}\right)\right)\ge\varepsilon,\ n\in\N.
\end{equation}

Indeed, the opposite implies the existence of a subsequence $\left\{d_{n(k)}\right\}_{k=1}^\infty$ such that
\begin{equation*}
d_{n(k)}\to 0,\ k\to\infty.
\end{equation*}

Then, by selecting a vector
\[
f_{n(k)}\in 
\spa\left(\left\{e_i\,|\,i\in\N,\ i\neq n(k)\right\}\right),\ k\in\N,
\] 
such that 
\[
\|e_{n(k)}-f_{n(k)}\|<d_{n(k)}+1/k,\ k\in\N,
\]
we arrive at
\begin{multline*}
1=\|e_{n(k)}\|
\hfill
\text{since, by \eqref{oortho1}, 
$E_A(\Delta_{n(k)})f_{n(k)}=0$;}
\\
\shoveleft{
=\|E_A(\Delta_{n(k)})(e_{n(k)}-f_{n(k)})\|\
\le \|E_A(\Delta_{n(k)})\|\|e_{n(k)}-f_{n(k)}\|
\hfill
\text{by \eqref{bounded};}
}\\
\ \
\le M\|e_{n(k)}-f_{n(k)}\|\le M\left[d_{n(k)}+1/k\right]
\to 0,\ k\to\infty,
\hfill
\end{multline*}
which is a \textit{contradiction} proving \eqref{ddist1}. 

As follows from the {\it Hahn-Banach Theorem}, for any $n\in\N$, there is an $e^*_n\in X^*$ such that 
\begin{equation}\label{HH-B1}
\|e_n^*\|=1,\ n\in\N,\ \text{and}\ \langle e_i,e_j^*\rangle=\delta_{ij}d_i,\ i,j\in\N.
\end{equation}

Let us consider separately the two possibilities concerning the sequence of the real parts $\{\Rep\lambda_n\}_{n=1}^\infty$: its being \textit{bounded} or \textit{unbounded}. 

The case of the sequence's $\{\Rep\lambda_n\}_{n=1}^\infty$ 
being \textit{bounded} is considered 
in absolutely the same manner as the corresponding case in the proof of the \textit{"only if"} part of Theorem \ref{open1} and furnishes a weak solution $y(\cdot)$ of equation \eqref{1} such that
\begin{equation*}
y(1)\not\in {\mathscr E}^{\{\beta\}}(A).
\end{equation*}

Hence, by Proposition \ref{particular}, $y(\cdot)$ does not belong to the Roumieu type Gevrey class ${\mathscr E}^{\{\beta\}}\left( (0,\infty),X\right)$, 
and the more so, the narrower Beurling type Gevrey class ${\mathscr E}^{(\beta)}\left( (0,\infty),X\right)$. 

Now, suppose that the sequence $\{\Rep\lambda_n\}_{n=1}^\infty$
is \textit{unbounded}. 

Therefore, there is a subsequence $\{\Rep\lambda_{n(k)}\}_{k=1}^\infty$ such that
\[
\Rep\lambda_{n(k)}\to \infty \ \text{or}\ \Rep\lambda_{n(k)}\to -\infty,\ k\to \infty.
\]

Let us consider separately each of the two cases.

The case of 
\[
\Rep\lambda_{n(k)}\to \infty,\ k\to \infty
\] 
is also considered in the same manner as the corresponding case in the proof of the \textit{"only if"} part of Theorem \ref{open1}, and again furnishes a weak solution $y(\cdot)$ of equation \eqref{1} such that
\begin{equation*}
y(1)\not\in {\mathscr E}^{\{\beta\}}(A).
\end{equation*}

Hence, by Proposition \ref{particular}, $y(\cdot)$ does not belong to the Roumieu type Gevrey class ${\mathscr E}^{\{\beta\}}\left( (0,\infty),X\right)$, let alone, the narrower Beurling type Gevrey class ${\mathscr E}^{(\beta)}\left( (0,\infty),X\right)$. 

Suppose that 
\[
\Rep\lambda_{n(k)}\to -\infty,\ k\to \infty.
\] 
Then, without loss of generality, we can regard that
\begin{equation}\label{iinfinity2}
\Rep\lambda_{n(k)} \le -k,\ k\in\N.
\end{equation}

Consider the element
\begin{equation*}
f:=\sum_{k=1}^\infty k^{-2}e_{n(k)}\in X,
\end{equation*}
which is well defined since $\{k^{-2}\}_{k=1}^\infty\in l_1$ and $\|e_{n(k)}\|=1$, $k\in\N$ (see \eqref{oortho1}).

By \eqref{oortho1},
\begin{equation}\label{ssubvectors2}
E_A(\cup_{k=1}^\infty\Delta_{n(k)})f=f\ \text{and}\
E_A(\Delta_{n(k)})f=k^{-2}e_{n(k)},\
k\in\N.
\end{equation}

For arbitrary $t\ge 0$ and any $g^*\in X^*$, 
\begin{multline}\label{ffirst3}
\int\limits_{\sigma(A)}e^{t\Rep\lambda}\,dv(f,g^*,\lambda)
\hfill
\text{by \eqref{decompose} as in \eqref{first1};}
\\
\shoveleft{
=\sum_{k=1}^\infty k^{-2}\int\limits_{\sigma(A)\cap\Delta_{n(k)}}e^{t\Rep\lambda}\,dv(e_{n(k)},g^*,\lambda)
}\\
\hfill
\text{since, for $\lambda\in \Delta_{n(k)}$, by
\eqref{iinfinity2} and \eqref{rradii1},}
\\
\hfill
\Rep\lambda=\Rep\lambda_{n(k)}+(\Rep\lambda-\Rep\lambda_{n(k)})\le \Rep\lambda_{n(k)}+|\lambda-\lambda_{n(k)}|\le -k+1\le 0;
\\
\shoveleft{
\le \sum_{k=1}^\infty k^{-2}\int\limits_{\sigma(A)\cap\Delta_{n(k)}}1\,dv(e_{n(k)},g^*,\lambda)
=\sum_{k=1}^\infty k^{-2}v(e_{n(k)},g^*,\Delta_{n(k)})
\hfill
\text{by \eqref{tv};}
}\\
\hspace{1.2cm}
\le \sum_{k=1}^\infty k^{-2}4M\|e_{n(k)}\|\|g^*\|
= 4M\|g^*\|\sum_{k=1}^\infty k^{-2}
<\infty.
\hfill
\end{multline}

Similarly, for arbitrary $t\ge 0$ and any $n\in\N$,
\begin{multline}\label{ssecond3}
\sup_{\{g^*\in X^*\,|\,\|g^*\|=1\}}
\int\limits_{\left\{\lambda\in\sigma(A)\,\middle|\,e^{t\Rep\lambda}>n\right\}}e^{t\Rep\lambda}\,dv(f,g^*,\lambda)
\hfill
\text{as in \eqref{ffirst3};}
\\
\shoveleft{
\le \sup_{\{g^*\in X^*\,|\,\|g^*\|=1\}}\sum_{k=1}^\infty k^{-2}
\int\limits_{\left\{\lambda\in\sigma(A)\,\middle|\,e^{t\Rep\lambda}>n\right\}\cap \Delta_{n(k)}}1\,dv(e_{n(k)},g^*,\lambda)
}\\
\hfill
\text{by \eqref{ssubvectors2};}
\\
\shoveleft{
=\sup_{\{g^*\in X^*\,|\,\|g^*\|=1\}}\sum_{k=1}^\infty 
\int\limits_{\left\{\lambda\in\sigma(A)\,\middle|\,e^{t\Rep\lambda}>n\right\}\cap \Delta_{n(k)}}1\,dv(E_A(\Delta_{n(k)})f,g^*,\lambda)
}\\
\hfill
\text{by \eqref{decompose};}
\\
\shoveleft{
=\sup_{\{g^*\in X^*\,|\,\|g^*\|=1\}}\int\limits_{\{\lambda\in\sigma(A)\,|\,e^{t\Rep\lambda}>n\}}1\,dv(E_A(\cup_{k=1}^\infty\Delta_{n(k)})f,g^*,\lambda)
\hfill
\text{by \eqref{ssubvectors2};}
}\\
\shoveleft{
=\sup_{\{g^*\in X^*\,|\,\|g^*\|=1\}}\int\limits_{\{\lambda\in\sigma(A)\,|\,e^{t\Rep\lambda}>n\}}1\,dv(f,g^*,\lambda)
\hfill
\text{by \eqref{cond(ii)};}
}\\
\shoveleft{
\le \sup_{\{g^*\in X^*\,|\,\|g^*\|=1\}}4M
\left\|E_A\left(\left\{\lambda\in\sigma(A)\,\middle|\,e^{t\Rep\lambda}>n\right\}\right)f\right\|\|g^*\|
}\\
\shoveleft{
\le 4M\|E_A(\{\lambda\in\sigma(A)\,|\,e^{t\Rep\lambda}>n\})f\|
}\\
\hfill
\text{by the strong continuity of the {\it s.m.};}
\\
\hspace{1.2cm}
\to 4M\left\|E_A\left(\emptyset\right)f\right\|=0,\ n\to\infty.
\hfill
\end{multline}

By Proposition \ref{prop}, \eqref{ffirst3} and \eqref{ssecond3} jointly imply that 
\[
f\in \bigcap\limits_{t\ge 0}D(e^{tA}),
\]
and hence, by Theorem \ref{GWS},
\[
y(t):=e^{tA}f,\ t\ge 0,
\]
is a weak solution of equation \eqref{1}.

Let
\begin{equation}\label{functional2}
h^*:=\sum_{k=1}^\infty k^{-2}e_{n(k)}^*\in X^*,
\end{equation}
the functional being well defined since $\{k^{-2}\}_{k=1}^\infty\in l_1$ and $\|e_{n(k)}^*\|=1$, $k\in\N$ (see \eqref{HH-B1}).

In view of \eqref{HH-B1} and \eqref{ddist1}, we have:
\begin{equation}\label{funct-dist2}
\langle e_{n(k)},h^*\rangle=\langle e_{n(k)},k^{-2}e_{n(k)}^*\rangle=d_{n(k)}k^{-2}\ge \varepsilon k^{-2},\ k\in\N.
\end{equation}

Since, for any $\lambda \in \Delta_{n(k)}$, $k\in \N$, by \eqref{iinfinity2} and \eqref{rradii1},
\begin{multline}\label{-iinf}
\Rep\lambda =\Rep\lambda_{n(k)}+(\Rep\lambda-\Rep\lambda_{n(k)})
\le
\Rep\lambda_{n(k)}+|\Rep\lambda-\Rep\lambda_{n(k)}|
\\
\hspace{1.2cm}
\le 
\Rep\lambda_{n(k)}+\varepsilon_{n(k)}\le -k+1 \le 0
\hfill
\end{multline}
and, by \eqref{ddisks1},
\[
-b_-|\Imp\lambda|^{1/\beta}<\Rep\lambda,
\]
we infer that, for any $\lambda \in \Delta_{n(k)}$, $k\in \N$,
\begin{equation*}
|\lambda|\ge|\Imp\lambda|\ge 
\left[b_-^{-1}(-\Rep\lambda)\right]^\beta.
\end{equation*}

Using this estimate, for 
\begin{equation}\label{ss}
s:=2b_->0
\end{equation}
and the functional $h^*\in X^*$ defined by \eqref{functional2}, we have:
\begin{multline}\label{nnotin3}
\int\limits_{\sigma(A)}e^{s|\lambda|^{1/\beta}}e^{\Rep\lambda}\,dv(f,h^*,\lambda)
\hfill
\text{by \eqref{decompose} as in \eqref{first1};}
\\
\shoveleft{
=\sum_{k=1}^\infty k^{-2}\int\limits_{\sigma(A)\cap\Delta_{n(k)}}e^{s|\lambda|^{1/\beta}}e^{\Rep\lambda}\,dv(e_{n(k)},h^*,\lambda)
}\\
\shoveleft{
\ge\sum_{k=1}^\infty k^{-2}\int\limits_{\Delta_{n(k)}}e^{[sb_-^{-1}-1](-\Rep \lambda)}\,dv(e_{n(k)},h^*,\lambda)
}\\
\hfill
\text{since $s:=2b_->0$ (see \eqref{ss});}
\\
\shoveleft{
=\sum_{k=1}^\infty k^{-2}\int\limits_{\sigma(A)\cap\Delta_{n(k)}}e^{-\Rep \lambda}\,dv(e_{n(k)},h^*,\lambda)
\hfill
\text{by \eqref{-iinf};}
}\\
\shoveleft{
\ge 
\sum_{k=1}^\infty k^{-2}e^{k-1}\int\limits_{\sigma(A)\cap\Delta_{n(k)}}1\,dv(e_{n(k)},h^*,\lambda)
=
\sum_{k=1}^\infty k^{-2}e^{k-1}v(e_{n(k)},h^*,\Delta_{n(k)})
}\\
\shoveleft{
\ge \sum_{k=1}^\infty k^{-2}e^{k-1}|
\langle E_A(\Delta_{n(k)})e_{n(k)},h^*\rangle|
\hfill
\text{by \eqref{oortho1} and \eqref{funct-dist2};}
}\\
\hspace{1.2cm}
\ge \sum_{k=1}^\infty \varepsilon
k^{-4}e^{k-1}
=\infty.
\hfill
\end{multline} 

By Proposition \ref{prop} and the properties of the \textit{o.c.} (see {\cite[Theorem XVIII.$2.11$ (f)]{Dun-SchIII}}), \eqref{nnotin3} implies that
\[
f\notin D(e^{s|A|^{1/\beta}}e^{A})
\]
with $s=2b_->0$, which, in view of \eqref{GC}, further implies that
\begin{equation*}
y(1)=e^{A}f\notin \bigcap_{s>0} D(e^{s|A|^{1/\beta}})
={\mathscr E}^{(\beta)}(A).
\end{equation*}

Whence, by Proposition \ref{particular}, we infer that the weak solution $y(t)=e^{tA}f$, $t\ge 0$, 
of equation \eqref{1} does not belong to the Beurling type Gevrey class ${\mathscr E}^{(\beta)}\left( (0,\infty),X\right)$, which completes our consideration of the case of
the sequence's $\{\Rep\lambda_n\}_{n=1}^\infty$ being \textit{unbounded}. 

With every possibility concerning $\{\Rep\lambda_n\}_{n=1}^\infty$ considered, 
the proof by contrapositive of the \textit{``only if" part} is complete and so is the proof of the 
entire statement.
\end{proof}

\begin{rem}
Thus, we have obtained a generalization of {\cite[Theorem $4.3$]{Markin2001(1)}}, the counterpart for a normal operator $A$ in a complex Hilbert space,
and of {\cite[Corollary $4.1$]{Markin2016}}, a characterization of the generation of a Berling type Gevrey ultradifferentiable $C_0$-semigroup by a scalar type spectral operator $A$.
\end{rem}

\section{Inherent Smoothness Improvement Effect}

Now, let us see that there is more to be said about the important particular case of \textit{analyticity} ($\beta=1$) in Theorem \ref{open1} (see Corollary \ref{CAWS}).

\begin{prop}\label{sector}
Let $A$ be a scalar type spectral operator in a complex Banach space $(X,\|\cdot\|)$. If every weak solution of equation \eqref{1} is analytically continuable into a complex neighborhood of $(0,\infty)$ (each one into its own), then all of them are analytically continuable into the open sector
\begin{equation*}
\Sigma_\theta:=\left\{\lambda\in \C\,\middle|\,|\arg \lambda|<\theta \right\}\setminus \{0\}
\end{equation*}
with 
\[
\theta:=\sup 
\left\{0<\varphi<\pi/2\,\middle|\, 
\left\{\lambda\in \sigma(A)\,\middle|\,\Rep\lambda <0,\
|\arg \lambda|\le\pi/2+\varphi \right\}\
\text{is bounded}\right\},
\]
where $-\pi<\arg\lambda \le \pi$ is the principal value of the argument of $\lambda$ \textup{($\arg 0:=0$)}.
\end{prop}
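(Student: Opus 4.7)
The plan is to extract from the hypothesis, via Corollary \ref{CAWS} and the definition of $\theta$, a ``left cone'' for the spectrum whose opening about the negative real axis can be pushed up to $\pi-2\theta$, and then to mimic the cone-by-cone estimates from the ``if'' parts of Theorems \ref{open1} and \ref{open2} to continue every weak solution analytically into $\Sigma_\theta$.

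By the hypothesis and Corollary \ref{CAWS}, there exist $b_+,b_->0$ with $\sigma(A)\setminus {\mathscr P}^1_{b_-,b_+}$ bounded, so $\theta\ge \pi/2-\arctan(1/b_-)>0$ and $\Sigma_\theta$ is a nontrivial sector. For any $0<\varphi<\theta$, the set $\left\{\lambda\in\sigma(A)\,\middle|\,\Rep\lambda<0,\ |\arg\lambda|\le \pi/2+\varphi\right\}$ is bounded by the definition of $\theta$, so outside some bounded $B_\varphi\subset\C$ one has $\sigma(A)\subseteq {\mathscr P}^1_{\tan\varphi,\,b_+}$: every such $\lambda$ satisfies either $\Rep\lambda\ge b_+|\Imp\lambda|$ or $\Rep\lambda\le -\tan\varphi\,|\Imp\lambda|$.

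Given a weak solution $y(t)=e^{tA}f$, $t\ge 0$, with $f\in\bigcap_{t\ge 0}D(e^{tA})$ by Theorem \ref{GWS}, I would set $\tilde y(z):=e^{zA}f$ for $z=re^{i\phi}\in\Sigma_\theta$ (in the sense of the Borel operational calculus) and first verify via Proposition \ref{prop} that $f\in D(e^{zA})$. Pick $\varphi\in(|\phi|,\theta)$ and split $\sigma(A)$ into $B_\varphi$, the right-cone piece $\sigma(A)\cap\{\Rep\lambda\ge b_+|\Imp\lambda|\}\setminus B_\varphi$, and the left-cone piece $\sigma(A)\cap\{\Rep\lambda\le -\tan\varphi\,|\Imp\lambda|\}\setminus B_\varphi$. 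Using the identity $\Rep(z\lambda)=r\cos\phi\,\Rep\lambda-r\sin\phi\,\Imp\lambda$, on the right cone $\Rep(z\lambda)\le r(\cos\phi+|\sin\phi|/b_+)\Rep\lambda$, and integrability against $v(f,g^*,\cdot)$ comes from $f\in D(e^{tA})$ for $t:=r(\cos\phi+|\sin\phi|/b_+)>0$ via Proposition \ref{prop}; on the left cone $\Rep(z\lambda)\le r(\cos\phi-|\sin\phi|/\tan\varphi)\Rep\lambda\le 0$, since $|\phi|<\varphi$ forces $\cos\phi-|\sin\phi|/\tan\varphi>0$ and $\Rep\lambda\le 0$, so the integrand is bounded by $1$ and finiteness is immediate from \eqref{tv}; on the bounded set $B_\varphi$ the integrand is bounded. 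Condition (ii) of Proposition \ref{prop} is verified by the same splitting combined with \eqref{cond(ii)} and the strong countable additivity of $E_A(\cdot)$, exactly in the pattern of the ``if'' parts of Theorems \ref{open1} and \ref{open2}.

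For analyticity, I would verify \textit{weak} analyticity of $\tilde y$ on $\Sigma_\theta$ and invoke the standard fact that a weakly analytic Banach-space-valued function is strongly analytic. For each $g^*\in X^*$,
\[
\langle \tilde y(z),g^*\rangle=\int_{\sigma(A)}e^{z\lambda}\,d\langle E_A(\lambda)f,g^*\rangle;
\]
on any sufficiently small closed disk $K\subset\Sigma_\theta$ centered at an arbitrary $z_0$, choosing $\varphi$ strictly between $\max_{z\in K}|\arg z|$ and $\theta$, the three-piece estimate above furnishes a $v(f,g^*,\cdot)$-integrable dominating function for $|e^{z\lambda}|$ uniform in $z\in K$; dominated convergence then delivers continuity and differentiation under the integral gives $\frac{d}{dz}\langle \tilde y(z),g^*\rangle=\int_{\sigma(A)}\lambda e^{z\lambda}\,d\langle E_A(\lambda)f,g^*\rangle$ on $\Sigma_\theta$. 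Since $\tilde y(t)=y(t)$ for $t>0$, $\tilde y$ is the sought analytic continuation of $y(\cdot)$ into $\Sigma_\theta$. The principal technical hurdle is engineering the uniform-in-$z$ dominating function: the left-cone estimate degrades as $|\phi|\uparrow \varphi$, so the strict gap between $\max_{z\in K}|\arg z|$ and $\theta$, and the consequent shrinking of $K$, are essential, and a parallel care is needed in the uniform treatment of the second Proposition \ref{prop} condition.
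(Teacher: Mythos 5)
Your proposal is correct, but it takes a genuinely different route from the paper. You work scalar-measure by scalar-measure: you define the candidate continuation $\tilde y(z)=e^{zA}f$ directly for $z\in\Sigma_\theta$, verify $f\in D(e^{zA})$ via Proposition \ref{prop} by splitting $\sigma(A)$ into a bounded set, a right cone $\{\Rep\lambda\ge b_+|\Imp\lambda|\}$, and a left cone $\{\Rep\lambda\le-\tan\varphi\,|\Imp\lambda|\}$ (your reduction of the definition of $\theta$ to this two-cone picture, and the elementary estimates $\Rep(z\lambda)\le r(\cos\phi+|\sin\phi|/b_+)\Rep\lambda$ on the right cone and $\Rep(z\lambda)\le r(\cos\phi-|\sin\phi|/\tan\varphi)\Rep\lambda\le 0$ on the left cone for $|\arg z|<\varphi$, are all sound), and then upgrade weak analyticity to strong analyticity by the classical Dunford theorem. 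The paper instead splits the \emph{operator}: for each admissible $\varphi$ it writes $A=A_\varphi^-+A_\varphi^+$ using the spectral projections onto $\{|\arg\lambda|\ge\pi/2+\varphi\}$ and its complement, quotes that $A_\varphi^-$ generates a $C_0$-semigroup analytic in $\Sigma_\varphi$ (from \cite{Markin2002(2)}) and that all weak solutions of $y'=A_\varphi^+y$ are entire (from {\cite[Corollary $4.1$]{Markin2018(4)}}, since $\sigma(A_\varphi^+)$ lies in the right cone up to a bounded set), and adds the two continuations via $e^{tA}=e^{tA_\varphi^-}+e^{tA_\varphi^+}-I$. The paper's argument is shorter because it outsources all the integrability estimates to previously established semigroup-generation and entireness results; yours is self-contained, reuses only Proposition \ref{prop}, \eqref{tv}, \eqref{cond(ii)}, and the machinery already deployed in the ``if'' parts of Theorems \ref{open1} and \ref{open2}, and has the additional merit of exhibiting the continuation explicitly as $z\mapsto e^{zA}f$ on all of $\Sigma_\theta$ rather than sector by sector. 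The technical cautions you flag (the strict gap $\max_{z\in K}|\arg z|<\varphi<\theta$ and the uniform dominating function on compact disks, including the extra factor $|\lambda|$ when differentiating under the integral) are exactly the points that need care, and they do go through since $\cos\phi-|\sin\phi|/\tan\varphi$ is bounded below by a positive constant on such a disk.
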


\begin{proof}
By Corollary \ref{CAWS}, the analyticity of all weak solutions of equation \eqref{1} on $(0,\infty)$ is equivalent to the existence of $b_+>0$ and $b_->0$ such that the set
\begin{equation*}
\sigma(A)\setminus \left\{\lambda\in \C\,\middle|\,
\Rep\le -b_-|\Imp \lambda|\ \text{or}\ \Rep\ge b_+|\Imp \lambda| \right\}
\end{equation*}
is \textit{bounded} (see Fig. \ref{fig:graph6}).

As is easily seen, this implies, in particular, that the set
\[
\Phi:=\left\{0<\varphi<\pi/2\,\middle|\, 
\left\{\lambda\in \sigma(A)\,\middle|\,\Rep\lambda <0,\
|\arg \lambda|\le\pi/2+\varphi \right\}\
\text{is bounded}\right\}\neq \emptyset.
\]

For any $\varphi \in \Phi$, 
\[
A=A_\varphi^-+A_\varphi^+,
\]
where the scalar type spectral operators
$A_\varphi^-$ and $A_\varphi^+$ are defined as follows:
\begin{equation*}
\begin{split}
A_\varphi^-&:=AE_A\left(\left\{\lambda \in \sigma(A)\,
\middle|\,|\arg \lambda|\ge \pi/2+\varphi\right\}\right),
\\
A_\varphi^+&:=AE_A\left(\left\{\lambda \in \sigma(A)\,
\middle|\,|\arg \lambda|<\pi/2+\varphi\right\}\right) 
\end{split}
\end{equation*}
(see {\cite[Theorem XVIII.$2.11$ (f)]{Dun-SchIII}}).

By the properties of the {\it o.c.} (see {\cite[Theorem XVIII.$2.11$ (h), (c)]{Dun-SchIII}}),
for any $\varphi \in \Phi$,
\begin{equation*}
\begin{split}
\sigma(A_\varphi^-)&\subseteq \left\{\lambda \in \sigma(A)\,
\middle|\,|\arg \lambda|\ge \pi/2+\varphi\right\}
\cup \{0\},
\\
\sigma(A_\varphi^+)&\subseteq 
\left\{\lambda \in \sigma(A)\,
\middle|\,|\arg \lambda|\le \pi/2+\varphi\right\}. 
\end{split}
\end{equation*}

Hence, by {\cite[Proposition $4.1$]{Markin2002(2)}}
(cf. also \cite{Markin2004(1)}), for any $\varphi \in \Phi$, the operator $A_\varphi^-$ generates the $C_0$-semigroup
$\left\{e^{tA_\varphi^-}\right\}_{t\ge 0}$ 
of the operator exponentials (see Preliminaries) {\it analytic} in the sector
\begin{equation*}
\Sigma_\varphi:=\left\{\lambda\in \C\,\middle|\,\arg \lambda <\varphi \right\}\setminus \{0\}
\end{equation*}
(see also \cite{Engel-Nagel}).

As follows from the premise, for any $\varphi \in \Phi$,
the set 
\begin{equation*}
\sigma(A_\varphi^+)\setminus \left\{ \lambda \in \C\,\middle|\,\Rep\lambda \ge b_+|\Imp\lambda|\right\},
\end{equation*}
is \textit{bounded}, which, by {\cite[Corollary $4.1$]{Markin2018(4)}}, implies that all weak solutions of the equation
\begin{equation*}
y'(t)=A_\varphi^+y(t),\ t\ge 0,
\end{equation*}
i.e., by Theorem \ref{GWS}, all vector functions of the form
\[
y(t)=e^{tA_\varphi^+}f,\ t\ge 0,f\in \bigcap_{t\ge 0}D(e^{tA_\varphi^+}) 
\]
are \textit{entire}.

By the properties of the {\it o.c.} (see {\cite[Theorem XVIII.$2.11$]{Dun-SchIII}}),
\begin{equation*}
e^{tA}=e^{tA_\varphi^-}+e^{tA_\varphi^+}-I,\ t\ge 0.
\end{equation*}

In view of the fact that
\[
D(e^{tA_\varphi^-})=X,\ t\ge 0,
\]
for each
\[
f\in \bigcap_{t\ge 0}D(e^{tA})=\bigcap_{t\ge 0}D(e^{tA_\varphi^+}),
\]
the vector function 
\[
y_+(t):=\left[e^{tA_\varphi^+}-I\right]f,\ t\ge 0,
\]
is entire, whereas the vector function
\[
y_-(t):=e^{tA_\varphi^-}f,\ t\ge 0,
\]
is analytically continuable into the open sector $\Sigma_\varphi$, which makes the
the vector function 
\[
y(t):=e^{tA}f=y_-(t)+y_+(t),\ t\ge 0,
\]
to be analytically continuable into the open sector $\Sigma_\varphi$.

Considering  that 
\[
\varphi\in\Phi\ \text{and}\ f\in \bigcap_{t\ge 0}D(e^{tA})
\]
are arbitrary, by Theorem \ref{GWS}, we infer that every weak solution of equation \eqref{1} is analytically continuable into the sector
\begin{equation*}
\Sigma_\theta:=\left\{\lambda\in \C\,\middle|\,|\arg \lambda|<\theta \right\}\setminus \{0\}
\end{equation*}
with $\theta:=\sup \Phi$.
\end{proof}

\begin{rems}\
\begin{itemize}
\item Thus, we have obtained a generalization of 
{\cite[Proposition $5.2$]{Markin2001(1)}}, the counterpart for a normal operator $A$ in a complex Hilbert space.
\item It is noteworthy that Corollary \ref{CAWS}
(i.e., Theorem \ref{open1} with $\beta=1$) and Proposition \ref{sector} with $\theta=\pi/2$ apply to equation \eqref{1} with a \textit{self-adjoint operator} in a complex Hilbert space, which implies that, for such an equation, all weak solutions are analytically continuable into the open right half-plane
\[
\left\{\lambda\in \C\,\middle|\,\Rep\lambda>0 \right\}
\]
(see {\cite[Corollary $5.1$]{Markin2001(1)}} and, for symmetric operators, {\cite[Theorem $6.1$]{Markin2001(1)}}).
\end{itemize} 
\end{rems} 

\section{Concluding Remark}

Due to the {\it scalar type spectrality} of the operator $A$, Theorems \ref{open1} and \ref{open2} are stated exclusively in terms of the location of its {\it spectrum} in the complex plane, similarly to the celebrated \textit{Lyapunov stability theorem} \cite{Lyapunov1892} 
(cf. {\cite[Ch. I, Theorem 2.10]{Engel-Nagel}}), and thus, are intrinsically qualitative statements (cf. \cite{Markin2011,Markin2018(4),Yosida1958}).

\section{Acknowledgments}

The author extends sincere gratitude to his colleague, Dr.~Maria Nogin of the Department of Mathematics, California State University, Fresno, for her kind assistance with the graphics.




\end{document}